\definecolor{myurlcolor}{rgb}{0,0,0.4}
\definecolor{mycitecolor}{rgb}{0,0.5,0}
\definecolor{myrefcolor}{rgb}{0.5,0,0}
\newcommand{\beq}{\begin{equation}}
\newcommand{\eeq}{\end{equation}}
\newcommand{\Z}{\mathbb{Z}}
\newcommand{\N}{\mathbb{N}}
\newcommand{\Q}{\mathbb{Q}}
\newcommand{\C}{\mathbb{C}}
\renewcommand{\H}{\mathcal{H}}
\newcommand{\B}{\mathcal{B}}
\newcommand{\staralg}{*\textnormal{-}\mathsf{alg}_1}
\newcommand{\Calg}{\mathsf{C^*alg}_1} 
\newcommand{\colim}[1]{\mathrm{colim}\, D}
\newcommand{\spec}[1]{\mathrm{Spec}(#1)} 
\newcommand{\id}[1]{\mathrm{id}_{#1}}
\theoremstyle{plain}
\newtheorem{thm}{Theorem}[section]
\newtheorem*{thm*}{Theorem}
\newtheorem{lem}[thm]{Lemma}
\newtheorem{prop}[thm]{Proposition}
\newtheorem{cor}[thm]{Corollary}
\newtheorem*{cor*}{Corollary}
\newtheorem{prob}[thm]{Problem}
\newtheorem{defn}[thm]{Definition}
\theoremstyle{remark}
\newtheorem{ex}[thm]{Example}
\newtheorem{rem}[thm]{Remark}
\begin{document}
\sloppy

\setlength{\jot}{6pt}



\title{Curious properties of free hypergraph C*-algebras}

\author{Tobias Fritz}
\email{tfritz@pitp.ca}
\address{Max Planck Institute for Mathematics in the Sciences, Leipzig, Germany}

\keywords{free hypergraph C*-algebra, undecidability, Connes Embedding Problem, nonlocal games, first incompleteness theorem}

\thanks{We thank William Slofstra and Andreas Thom for discussions, the anonymous referee for helpful feedback, Wilhelm Winter for suggesting the term ``free hypergraph C*-algebra'', and especially Nicholas Gauguin Houghton-Larsen for copious help with set theory and incompleteness and for detailed feedback on a draft. Without his assistance and patience, we probably would have stayed completely oblivious to the subtleties discussed in Appendix~\ref{logic}. We also thank David Roberson and William Slofstra for helpful comments on a draft.}

\subjclass[2010]{Primary: 46L99, 03D80; Secondary: 81P13, 03F40.}

\begin{abstract}
	A finite hypergraph $H$ consists of a finite set of vertices $V(H)$ and a collection of subsets $E(H) \subseteq 2^{V(H)}$ which we consider as partition of unity relations between projection operators. These partition of unity relations freely generate a universal C*-algebra, which we call the \emph{free hypergraph C*-algebra} $C^*(H)$. General free hypergraph C*-algebras were first studied in the context of quantum contextuality. As special cases, the class of free hypergraph C*-algebras comprises quantum permutation groups, maximal group C*-algebras of graph products of finite cyclic groups, and the C*-algebras associated to quantum graph homomorphism, isomorphism, and colouring.

Here, we conduct the first systematic study of aspects of free hypergraph C*-algebras. We show that they coincide with the class of finite colimits of finite-dimensional commutative C*-algebras, and also with the class of C*-algebras associated to synchronous nonlocal games. We had previously shown that it is undecidable to determine whether $C^*(H)$ is nonzero for given $H$. We now show that it is also undecidable to determine whether a given $C^*(H)$ is residually finite-dimensional, and similarly whether it only has infinite-dimensional representations, and whether it has a tracial state. It follows that for each one of these properties, there is $H$ such that the question whether $C^*(H)$ has this property is independent of the ZFC axioms, assuming that these are consistent. We clarify some of the subtleties associated with such independence results in an appendix.
\end{abstract}

\maketitle

\tableofcontents

\section{Introduction}

The \emph{quantum permutation group}~\cite{quantum_perms,quantum_perms_survey} of a finite set of cardinality $n$ is the universal unital C*-algebra generated by a matrix of projections $(p_{ij})_{i,j=1}^n$ satisfying a certain system of equations, which state exactly that this matrix is formally unitary. Due to the idempotency and self-adjointness of each matrix entry, this unitarity turns out to be equivalent to the condition that the projections making up each column should form a partition of unity,
\beq
\label{colsum}
	\sum_{i = 1}^n p_{ij} = 1, \qquad \forall j=1,\ldots,n,
\eeq
and similarly for each row,
\beq
\label{rowsum}
	\sum_{j = 1}^n p_{ij} = 1, \qquad \forall i=1,\ldots,n.
\eeq
In order to obtain the quantum permutation group, one also needs to equip this C*-algebra with a suitable comultiplication and antipode. But since we are concerned only with the C*-algebra structure in this paper, we will not discuss these additional structures, but rather focus on the above partition of unity relations only. These can be conveniently illustrated by a hypergraph as in Figure~\ref{qpg}.

\begin{figure}
\begin{tikzpicture}
\node[draw,shape=circle,fill,scale=.5] at (0,0) {} ;
\node[draw,shape=circle,fill,scale=.5] at (0,1) {} ;
\node[draw,shape=circle,fill,scale=.5] at (1,0) {} ;
\node[draw,shape=circle,fill,scale=.5] at (1,1) {} ;
\node[draw,shape=circle,fill,scale=.5] at (0,4) {} ;
\node[draw,shape=circle,fill,scale=.5] at (0,5) {} ;
\node[draw,shape=circle,fill,scale=.4] at (1,4) {} ;
\node[draw,shape=circle,fill,scale=.4] at (1,5) {} ;
\node[draw,shape=circle,fill,scale=.4] at (4,0) {} ;
\node[draw,shape=circle,fill,scale=.4] at (4,1) {} ;
\node[draw,shape=circle,fill,scale=.4] at (5,0) {} ;
\node[draw,shape=circle,fill,scale=.4] at (5,1) {} ;
\node[draw,shape=circle,fill,scale=.4] at (4,4) {} ;
\node[draw,shape=circle,fill,scale=.4] at (4,5) {} ;
\node[draw,shape=circle,fill,scale=.4] at (5,4) {} ;
\node[draw,shape=circle,fill,scale=.4] at (5,5) {} ;
\node at (2.5,0) {$\cdots$} ;
\node at (2.5,1) {$\cdots$} ;
\node at (2.5,4) {$\cdots$} ;
\node at (2.5,5) {$\cdots$} ;
\node at (0,2.5) {$\vdots$} ;
\node at (1,2.5) {$\vdots$} ;
\node at (4,2.5) {$\vdots$} ;
\node at (5,2.5) {$\vdots$} ;
\node at (2.5,2.5) {$\ddots$} ;
\draw[thick,blue] (2.5,0) ellipse (3.4cm and .3cm) ;
\draw[thick,blue] (2.5,1) ellipse (3.4cm and .3cm) ;
\draw[thick,blue] (2.5,4) ellipse (3.4cm and .3cm) ;
\draw[thick,blue] (2.5,5) ellipse (3.4cm and .3cm) ;
\draw[thick,blue] (0,2.5) ellipse (.3cm and 3.3cm) ;
\draw[thick,blue] (1,2.5) ellipse (.3cm and 3.3cm) ;
\draw[thick,blue] (4,2.5) ellipse (.3cm and 3.3cm) ;
\draw[thick,blue] (5,2.5) ellipse (.3cm and 3.3cm) ;
\end{tikzpicture}
\caption{Quantum permutation groups are free hypergraph C*-algebras generated by partition of unity relations between projections, where these relations are encoded in hypergraphs of this particular form. Each vertex represents a generating projection, and each hyperedge stands for a partition of unity relation.}
\label{qpg}
\end{figure}
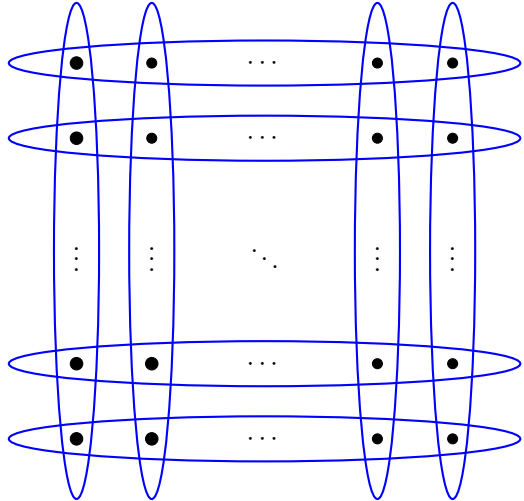

It is easy to generalize this example to arbitrary hypergraphs, which leads to the definition of free hypergraph C*-algebras. We had introduced free hypergraph C*-algebras previously in~\cite{afls} in order to study the phenomenon of quantum contextuality in the foundations of quantum mechanics. There, we had asked whether it is Turing decidable to determine whether  $C^*(H) = 0$ for a given $H$ or not. We noted that the decidability would follow if every $C^*(H)$ was residually finite-dimensional. Questions of this type are of interest due to Kirchberg's QWEP conjecture---or equivalently the Connes Embedding Problem---which can be formulated as asking whether the free hypergraph C*-algebra of the hypergraph shown in Figure~\ref{qwep} is residually finite-dimensional. Building on results of Slofstra~\cite{embedding_theorem}, we had subsequently shown that the problem ``Is $C^*(H) = 0$ for given $H$?'' is undecidable, thereby resolving the \emph{inverse sandwich conjecture} of~\cite{afls}. It follows that there are free hypergraph C*-algebras which are not residually finite-dimensional.

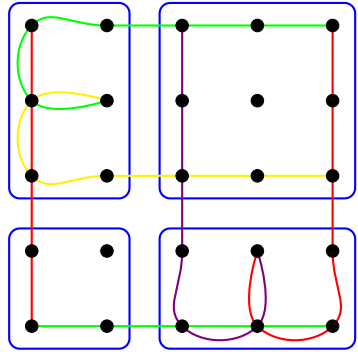
\begin{figure}
\begin{tikzpicture}
\draw[rounded corners,thick,blue] (0.7,0.7) rectangle (2.3,2.3) ;
\draw[rounded corners,thick,blue] (2.7,0.7) rectangle (5.3,2.3) ;
\draw[rounded corners,thick,blue] (0.7,2.7) rectangle (2.3,5.3) ;
\draw[rounded corners,thick,blue] (2.7,2.7) rectangle (5.3,5.3) ;
\draw[thick,red] plot coordinates { (1,1) (1,5) };
\draw[thick,red] plot [smooth,tension=.9] coordinates { (5,5) (5,3) (5,2) (5,1) (4,1) (4,2) };
\draw[thick,violet] plot [smooth,tension=.9] coordinates { (3,5) (3,3) (3,2) (3,1) (4,1) (4,2) };
\draw[thick,green] plot coordinates { (1,1) (5,1) };
\draw[thick,green] plot [smooth,tension=.9] coordinates { (5,5) (3,5) (2,5) (1,5) (1,4) (2,4) };
\draw[thick,yellow] plot [smooth,tension=.9] coordinates { (5,3) (3,3) (2,3) (1,3) (1,4) (2,4) };
\foreach \x in {1,2,3,4,5} \foreach \y in {1,2,3,4,5}
{
	\node[draw,shape=circle,fill,scale=.5,black] (e) at (\x,\y) {};
	\pgfmathtruncatemacro{\y}{1 - \x}
}
\end{tikzpicture}
\caption[]{The C*-algebra of this hypergraph\footnotemark{} is residually finite-dimensional if and only if the Connes Embedding Eroblem has a positive answer.}
\label{qwep}
\end{figure}

\footnotetext{Depending on what is easier to parse in each case, we either draw hyperedges either as closed curves that surround vertices, or as non-closed curves that connect vertices. Also, the colours that we use have no meaning beyond aiding the human eye.}

Here, we will go further and also prove that the residual finite-dimensionality itself is undecidable (Theorem~\ref{rfd}). And similarly for two other finiteness properties, namely the existence of an infinite-dimensional representation without a finite-dimensional one (Theorem~\ref{infrep}), and the existence of a tracial state (Theorem~\ref{tracial}). We also discuss the implications of such undecidability results for questions concerning independence from the ZFC axioms: for each of these undecidable properties, there are hypergraphs $H$ for which the question whether $C^*(H)$ has this property of independent of ZFC, assuming that ZFC is consistent; and likewise for any other consistent and recursively axiomatizable formal system which contains arithmetic (Corollary~\ref{zfc}). There are some treacherous subtleties hidden in the logical aspects here which we discuss in Appendix~\ref{logic}.

We also show that the class of free hypergraph C*-algebras is quite natural in itself, from two very different perspective: first, they are exactly the finite colimits of finite-dimensional commutative C*-algebras (Theorem~\ref{colimthm}); second, they are exactly the C*-algebras associated to synchronous nonlocal games~\cite{games} (Theorem~\ref{syncthm}). Also the large number of examples in Section~\ref{mainprops} underlines the naturality.

We hope that this paper will provide a unifying perspective on the different contexts in which free hypergraph C*-algebras have been used, and also that our undecidability results will indicate that the general study of free hypergraph C*-algebras may be as interesting as the study of finitely presented groups.

\subsubsection{Conventions} We work with \emph{unital} C*-algebras and \emph{unital} $*$-homomorphisms throughout, even when we do not explicitly say so; in particular, all the finitely presented C*-algebras that we consider are defined in terms of the corresponding universal property in $\Calg$, the category of unital C*-algebras and unital $*$-homomorphisms. 

\subsubsection{Caveat} As far as we can see, our free hypergraph C*-algebras have no direct relation with the more widely studied \emph{graph C*-algebras}. We welcome suggestions for a different terminology that will avoid the possibility for confusion here.

\section{The class of free hypergraph C*-algebras and alternative characterizations}
\label{mainprops}

For our purposes, a \emph{hypergraph} $H$ consists of a finite set of \emph{vertices} $V(H)$ and a collection of \emph{edges} $E(H) \subseteq 2^{V(H)}$ such that each vertex is contained in at least one edge, or equivalently $\bigcup E(H) = V(H)$. 

\begin{defn}
Given a hypergraph $H$, the \emph{free hypergraph C*-algebra} is the finitely presented C*-algebra
\beq
\label{CHdef}
	C^*(H) := \left\langle (p_v)_{v\in V(H)} \:\bigg|\;\; p_v = p^*_v = p_v^2, \quad \sum_{v\in e} p_v = 1 \;\: \forall e\in E(H) \:\right\rangle_{\Calg}
\eeq
where the subscript indicates that this is a presentation in the category of (unital) C*-algebras.
\end{defn}

Since all generators are projections, there is a preexisting bound on the norm of each $*$-polynomial in the generators, resulting in an Archimedean quadratic module in the $*$-algebra of noncommutative polynomials. This guarantees that the so defined finitely presented C*-algebra indeed exists, and it can be constructed as the completion of the complex $*$-algebra $\C[H]$ of~\eqref{CHalg} with respect to the seminorm
\[
	\| x \| := \sup_{\pi : \C[H] \to \B(\H)} \| \pi(x) \|,
\]
where $\pi : \C[H] \to \B(\H)$ ranges over all $*$-representations, and the process of taking the completion also comprises taking the quotient with respect to the null ideal of all elements $x\in\C[H]$ with $\pi(x) = 0$ first. See e.g.~\cite[p.~3]{about_connes} for a review of this type of construction.

\begin{rem}
Since free hypergraph C*-algebras are defined as universal C*-algebras given by a finite presentation, we generally do not have an explicit concrete form for them, in the sense of an explicitly described embedding into some $\B(\H)$. A notable and interesting exception is the free hypergraph C*-algebra associated to the hypergraph consisting of two non-overlapping hyperedges containing two vertices each. Equivalently, this is universal C*-algebra generated by two projections, or the group C*-algebra $C^*(\Z_2\ast\Z_2)$; see Example~\ref{du}. Since this group is amenable (e.g.~since it is isomorphic to $\Z\rtimes\Z_2$), the maximal and reduced group C*-algebras coincide, and we therefore have a concrete description available~\cite{2projections}.

However, in general we are not aware of a precise definition of what would make a finitely presented C*-algebra ``concrete''. It would be plausible to define concreteness as the existence of a countable family of states which are recursively enumerable in the sense of an algorithm which computes the value of any of these states on any $*$-polynomial in the generators up to any desired accuracy; or equivalently in terms of operators on $\ell^2(\Z)$ which represent the generators and are computable in the sense of an algorithm which returns any matrix entry with respect to the standard basis with any desired accuracy. However, we will not pursue this concreteness question any further in this work.
\end{rem}

One way think that working with the C*-algebra $C^*(H)$ is overkill, since we are merely analyzing a combinatorial structure, and might therefore just as well do something entirely algebraic. For this reason, it may also be interesting to study the analogously defined $*$-algebra
\beq
\label{CHalg}
	\C[H] := \left\langle (p_v)_{v\in V(H)} \:\bigg|\;\; p_v = p^*_v = p_v^2, \quad \sum_{v\in e} p_v = 1 \;\: \forall e\in E(H) \:\right\rangle_{\staralg}.
\eeq
Here, the subscript $\staralg$ indicates that this presentation is now understood to encode the corresponding universal property in the category of complex unital $*$-algebras, in order to keep track of the difference to~\eqref{CHdef}. So concretely, $\C[H]$ is the free noncommutative algebra over $\C$ generated by elements $p_v$ modulo the two-sided ideal generated by the elements $p_v^2 - p_v$ for all $v$ and $1 - \sum_{v\in e} p_v$ for all $e$. Upon declaring each generator to be self-adjoint, $\C[H]$ becomes a $*$-algebra, since the ideal is automatically self-adjoint.

The problem with this is that the canonical $*$-homomorphism $\C[H] \to C^*(H)$ is generally not injective, since the orthogonality relations $p_v p_w = 0$ for distinct $v,w\in e$ fail to be satisfied even in the trivial case where $H$ only consists of a single edge~\cite[p.~17]{games}, but they necessarily hold in $C^*(H)$ per Example~\ref{trivial}. We can try to fix this by quotienting by these relations as well, meaning that we consider $\C[H]/I$ for the two-sided ideal generated by the elements $p_v p_w$, where $v$ and $w$ range over all distinct $v,w\in V(H)$ occurring in some common edge. This is again a self-adjoint ideal by definition, so that $\C[H]/I$ is another $*$-algebra. One may now hope to have a purely algebraic description of a dense subset of $C^*(H)$, in the sense that the canonical $*$-homomorphism $\C[H]/I \to C^*(H)$ is injective. Unfortunately, this is not the case, in a very extreme sense:

\begin{thm}
There are hypergraphs $H$ for which $C^*(H) = 0$ despite $\C[H]/I \neq 0$.
\end{thm}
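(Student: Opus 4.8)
The plan is to exhibit a single hypergraph $H$ that is rich enough to separate the two objects: the $*$-algebra $\C[H]/I$ must be nonzero, yet the completed C*-algebra $C^*(H)$ must vanish. The natural strategy is to reuse the machinery behind the already-established result that it is undecidable whether $C^*(H) = 0$, which in particular guarantees the existence of hypergraphs with $C^*(H) = 0$. So the first step is to fix any such $H$ with $C^*(H) = 0$; this half of the claim is then free. The entire burden is to show that one can arrange, or that it automatically happens, that $\C[H]/I \neq 0$ for such an $H$.

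The key conceptual point is that vanishing of $C^*(H)$ is a genuinely analytic/representation-theoretic phenomenon: $C^*(H) = 0$ means precisely that there is \emph{no} Hilbert space representation of the generators as projections satisfying the partition-of-unity relations (equivalently, $1$ lies in the closed ideal generated by the relations, so the quadratic-module/Archimedean argument forces $1 = 0$). By contrast, $\C[H]/I \neq 0$ is a purely algebraic statement: it asserts that the two-sided ideal generated by $p_v^2 - p_v$, the edge relations $1 - \sum_{v\in e} p_v$, and the orthogonality relations $p_v p_w$ does not contain $1$. The two notions genuinely differ because passing to $C^*(H)$ allows infinite analytic manipulations (limits, the full force of positivity and the $C^*$-identity) that are unavailable at the level of $\C[H]/I$, where only finite algebraic combinations with polynomial coefficients are permitted. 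I would therefore try to separate them by finding $H$ for which the algebraic relations are consistent — they admit, say, a nonzero finite-dimensional \emph{linear} or module-theoretic model, or more simply a nonzero $\C$-algebra quotient — while no $*$-representation on Hilbert space exists.

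Concretely, the most promising route is to show that $\C[H]/I$ surjects onto some nonzero algebra, which certifies $\C[H]/I \neq 0$ without any analysis. The cleanest certificate is a ring homomorphism $\C[H]/I \to A$ to a nonzero associative $\C$-algebra $A$ sending each $p_v$ to an idempotent $e_v$ with $e_v e_w = 0$ for $v,w$ in a common edge and $\sum_{v\in e} e_v = 1$ — but crucially with the $e_v$ \emph{not} self-adjoint with respect to any inner product, i.e.\ a representation by non-orthogonal idempotents that cannot be upgraded to projections. Such non-orthogonal idempotent solutions can exist even when no orthogonal (projection) solution does, and this is exactly the gap between $\C[H]/I$ and $C^*(H)$: the $*$-structure and positivity are what kill the C*-algebra, while the bare algebra only sees idempotency and the linear partition relations. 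I expect the main obstacle to be verifying, for a concrete $H$ coming out of the undecidability construction, that such a non-orthogonal idempotent model actually exists over $\C$; one likely handles this by taking the group/algebra presentation underlying Slofstra's construction and observing that the \emph{algebraic} (non-$*$, or merely formal-$*$) solution space is populated by finite-dimensional matrix idempotents that need not be Hermitian, whereas the Hilbert-space positivity that $C^*(H)$ demands is obstructed.

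An alternative, possibly slicker, plan avoids exhibiting an explicit model: argue abstractly that $\C[H]/I = 0$ would itself be a \emph{decidable} property. Indeed, $1$ lies in the finitely generated two-sided ideal $I' = I + (\text{relations})$ iff there is a finite algebraic certificate (a finite expression of $1$ as a combination of the generating relations with polynomial coefficients), and one can enumerate such certificates. If in addition one could semi-decide $\C[H]/I \neq 0$ — for instance via a Gröbner-basis / Diamond-Lemma normal-form computation for these quadratic monomial-type relations, which for this particular shape of presentation terminates — then ``$\C[H]/I = 0$'' would be decidable. Combined with the \emph{undecidability} of ``$C^*(H) = 0$'', the equivalence $C^*(H) = 0 \iff \C[H]/I = 0$ would make the latter decidable, a contradiction; hence the two cannot coincide, producing the desired $H$ with $\C[H]/I \neq 0$ but $C^*(H) = 0$. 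The delicate point in this route is establishing that ``$\C[H]/I = 0$'' is genuinely decidable (the termination and confluence of the rewriting system for these relations), which is where I would expect the real work to lie.
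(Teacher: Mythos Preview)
Your proposal outlines two plausible strategies but completes neither, so it is not yet a proof.

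Route 1 is conceptually on the right track: you want a nonzero algebra model of the relations by idempotents that cannot be upgraded to projections on a Hilbert space. But you never construct such a model; you only say you ``expect'' one to exist for an $H$ coming out of the undecidability machinery, and ``one likely handles this'' via Slofstra's construction. That is exactly the step that carries all the content, and it is left blank. In fact, there is no reason to believe that the specific hypergraphs arising from Slofstra's reduction automatically have $\C[H]/I \neq 0$ when $C^*(H) = 0$; you would need a separate argument or a different family of examples. Route 2 hinges entirely on the claim that ``$\C[H]/I = 0$'' is decidable, which you justify only by the parenthetical assertion that the rewriting system ``for this particular shape of presentation terminates.'' The relations are not purely monomial---the partition-of-unity constraints $\sum_{v\in e} p_v = 1$ are genuine linear relations---so confluence and termination are far from automatic, and you give no argument for them. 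Without that, the contradiction with undecidability of $C^*(H)=0$ does not fire.

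The paper's proof bypasses both difficulties by citing a concrete example rather than arguing abstractly: for the hypergraph $H$ associated to the synchronous game of $4$-colouring $K_5$, Helton--Meyer--Paulsen--Satriano exhibit an explicit nonzero $*$-algebraic $4$-colouring (so $\C[H]/I \neq 0$), while Ortiz--Paulsen show that no C*-algebraic $4$-colouring exists (so $C^*(H) = 0$). This is precisely an instance of your Route~1, but with the missing model supplied by a known theorem rather than left as an expectation.
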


As the proof shows, this is a simple consequence of results of Helton, Meyer, Paulsen and Satriano~\cite{games} and Ortiz and Paulsen~\cite{quantum_graph_hom}. Our specific $H$ which displays this behaviour is of the form of Example~\ref{qgh}, arising from quantum graph 4-colouring.

\begin{proof}
We use the free hypergraph C*-algebras of synchronous nonlocal games and their $*$-algebras $\C[H]/I$, as discussed before the upcoming Theorem~\ref{syncthm}. Then we consider the game of 4-colouring the complete graph $K_5$. Surprisingly, a $*$-algebraic 4-colouring exists~\cite[Theorem~6.2]{games}, meaning that $\C[H]/I \neq 0$. However, no C*-algebraic 4-colouring can exist~\cite[Proposition~4.10]{quantum_graph_hom}, so that $C^*(H) = 0$.
\end{proof}

It is possible to extract from the proof a concrete description of additional algebraic relations arising between projections together with partition of unity relations, such that these relations are enforced in any Hilbert space representation, but do not follow from the standard orthogonality relations discussed above.

Thus understanding the algebra of $C^*(H)$ remains mostly an open problem:

\begin{prob}
Is there a concrete description of the kernel of the canonical $*$-homomorphism $\C[H] \to C^*(H)$? In particular, is there an algorithm to determine whether a given $x\in \Q[H]$ is in the null ideal, meaning that $\pi(x) = 0$ for all $*$-representations $\pi : \C[H] \to \B(\H)$?
\end{prob}

\subsection{Examples of free hypergraph C*-algebras}

\begin{ex}
\label{trivial}
The most trivial examples are those hypergraphs $H$ which consist of only a single edge, say with $n$ vertices. In this case, the free hypergraph C*-algebra is the C*-algebra generated by $n$ projections $p_1,\ldots,p_n$ satisfying $\sum_i p_1 = 1$. The universal property of $C^*(H)$ states exactly that, for any C*-algebra $A$, the partitions of unity in $A$, given by tuples of projections $p_1,\ldots,p_n$ satisfying $\sum_i p_i = 1$, must be in bijection with their \emph{classifying homomorphisms} $C^*(H) \to A$.

Since the orthogonality relations $p_i p_j = \delta_{ij}$ are automatic\footnote{It is well-known that if $p + q \leq 1$ for projection operators $p$ and $q$ on a Hilbert space, then $p$ and $q$ are orthogonal, meaning that $pq = 0$.}, it is easy to see that this free hypergraph C*-algebra is isomorphic to $\C^n$ with $p_i$ the $i$-th standard basis vector, or by Fourier transform equivalently to the group C*-algebra $C^*(\Z_n)$. As a degenerate special case, we also obtain the zero algebra $\C^0 \cong 0$ (associated to the hypergraph consisting of the empty edge). For $n=2$, the partitions of unity are necessarily of the form $(p,1-p)$ for a single projection $p$, so that the homomorphisms $\C^2\to A$ are precisely the classifying homomorphisms of projections in $A$.
\end{ex}

\begin{ex}
\label{du}
Given two hypergraphs $H_1$ and $H_2$, we can take their disjoint union $H_1 \amalg H_2$, which results in the free product
\[
	C^*(H_1 \amalg H_2) \cong C^*(H_1) \ast C^*(H_2),
\]
where the isomorphism is easy to see since both sides have the same universal property. (Here, $\ast$ denotes the coproduct in $\Calg$, which is the free product amalgamated over $1$.) In combination with the previous example, we obtain that all C*-algebras of the form
\[
	\C^{n_1} \ast \ldots \ast \C^{n_k} \cong C^*(\Z_{n_1} \ast \ldots \ast \Z_{n_k})
\]
are free hypergraph C*-algebras, corresponding to the class of hypergraphs having only disjoint edges.
\end{ex}

\begin{ex}
As we will see in Lemma~\ref{permanence}, imposing commutation between the generating projections of a free hypergraph C*-algebra results in another free hypergraph C*-algebra. Therefore also any \emph{graph product}~\cite{graph_prods} of finite cyclic group has a maximal group C*-algebra which is a free hypergraph C*-algebra. The proof of Proposition~\ref{qwep_rfd} gives a concrete example.
\end{ex}

\begin{ex}
\label{exsolgroup}
Continuing with the theme of maximal group C*-algebras $C^*(\Gamma)$ for finitely presented groups $\Gamma$, one also obtains a free hypergraph C*-algebra whenever $\Gamma$ is a \emph{solution group} in the sense of~\cite{solgroup}; the construction of the hypergraph is essentially that of~\cite[Lemma~10]{quantum_logic}, which shows that one also obtains a free hypergraph C*-algebra upon additionally enforcing $J = -1$.

In complete generality however, we do not know for which finitely groups $\Gamma$ it is the case that $C^*(\Gamma)$ is (isomorphic to) a free hypergraph C*-algebra.
\end{ex}

\begin{ex}
There are many nontrivial hypergraphs $H$ with $C^*(H) \cong 0$. For example if $H$ consists of three vertices such that any two of them make up an edge, then $C^*(H)$ is generated by three projections $p_1,p_2,p_3$ with $p_1 + p_2 = p_1 + p_3 = p_2 + p_3 = 1$. This implies $2 p_i = 1$ for every $i$. This leads to $0 = 1$, since this is the only way for $p_i = \frac{1}{2}$ to be a projection. Therefore $C^*(H)$ is the zero algebra. Theorem~\ref{undecidable} can be interpreted as stating that it is impossible to classify all the ways in which $C^*(H) = 0$ can occur.
\end{ex}

\begin{ex}
As outlined in the introduction, the \emph{quantum permutation group} on $n$ elements~\cite{quantum_perms,quantum_perms_survey} is the free hypergraph C*-algebra generated by projections $(p_{ij})_{i,j=1}^n$ satisfying the relations~(\ref{colsum},\ref{rowsum}). These are equivalent to postulating that the matrix of projections $U := (p_{ij})_{i,j=1}^n$ must be formally unitary,
\[
	\sum_{j=1}^n p_{ji}^* p_{jk} = \delta_{jk},
\]
which is a set of equations easily seen to be equivalent to the requirement that each row and each column of the matrix must be a partition of unity, assuming that each matrix entry $p_{ij}$ is a projection.
\end{ex}

\begin{ex}
\label{qgi}
A generalization of the previous example is that of \emph{quantum graph isomorphism}~\cite{quantum_graph_iso1,quantum_graph_iso2}. We fix a number of vertices $n$ and consider the quantum permutation group as above. Given a graph\footnote{Here, all graphs are finite and do not contain parallel edges. Although~\cite{quantum_graph_iso1,quantum_graph_iso2} only consider undirected graphs without loops, the definition and basic properties---such as the equivalence of~\eqref{am_commute} with the orthogonality relations---do not need this assumption and apply to all directed graphs, potentially with loops.} $G$ on $n$ vertices, its adjacency matrix is the matrix $A_G \in \{0,1\}^{n\times n}$ with $A_{ij} = 1$ for vertices $i,j\in G$ if and only if $i \sim j$, meaning that $i$ and $j$ are adjacent. Now given graphs $G$ and $G'$, let us consider the quantum permutation group generated by $U = (p_{ij})_{i,j=1}^n$ subject to the additional relation $U A_G = A_{G'} U$. The $(j,k)$-entry of this matrix equation is
\beq
\label{am_commute}
	\sum_{k \: : \: k \sim j} p_{ik} = \sum_{k \: : \: i \sim' k } p_{kj},
\eeq
where $\sim'$ denotes adjacency in $G'$. We now argue that this set of relations can be encoded in a set of orthogonality relations between the generating projections, so that we again obtain a free hypergraph C*-algebra by Lemma~\ref{permanence}; see also the discussion in the second half of~\cite[Section~2.1]{quantum_graph_iso2}. The crucial observation is that we have
\[
	\sum_{k \: : \: k \sim j} p_{ik} + \sum_{k \: : \: k \not\sim j} p_{ik} = 1,\qquad \sum_{k \: : \: i \sim' k} p_{kj} + \sum_{k \: : \: i \not\sim' k} p_{kj} = 1.
\]
Thanks to these relations, it follows that~\eqref{am_commute} is equivalent to the orthogonality relations
\[
	\sum_{k \: : \: k \not\sim j} p_{ik} \perp \sum_{k \: : \: i \sim' k} p_{kj}, \qquad \sum_{k \: : \: k \sim j} p_{ik} \perp \sum_{k \: : \: i \not\sim' k} p_{kj}.
\]
And each one of these in turn is equivalent to each projection appearing in the sum on the left to be orthogonal to each projection appearing in the sum on the right. This reproduces the presentation in terms of orthogonality relations as in~\cite[Section~2.1]{quantum_graph_iso2}. (See also the earlier~\cite[Lemma~3.1.1]{quantum_autos} in the case $G = G'$.) This defines the quantum isomorphism free hypergraph C*-algebra associated to $G$ and $G'$. Also~\cite[Theorems~2.4 and 2.5]{quantum_graph_iso2} follow, since the abstract C*-algebra stays the same; it's only the presentation which changes upon exchanging the relations~\eqref{am_commute} with the orthogonality relations.

\cite{quantum_graph_iso2} defines graphs $G$ and $G'$ to be \emph{quantum isomorphic} if this free hypergraph C*-algebra is nonzero. A stronger notion of quantum isomorphism was considered in the prior work~\cite{quantum_graph_iso1}, where it was required in addition that the free hypergraph C*-algebra must have a finite-dimensional representation.\footnote{This is not exactly how the definition is phrased, but is known to be equivalent~\cite[Theorem~2.1]{quantum_graph_iso2}.} These two notions of quantum isomorphism are not equivalent~\cite[Result~4]{quantum_graph_iso1}, although the latter is trivially stronger. There also is an explicit example of two graphs which are not isomorphic, but quantum isomorphic in both senses~\cite[Theorem~6.4]{quantum_graph_iso1}.

In the special case $G = G'$, the above free hypergraph C*-algebra becomes the quantum automorphism group of the graph $G$ as introduced by Banica~\cite{quantum_graph_auto}. Further specializing to $G$ being the complete or the empty graph on $n$ vertices recovers the quantum permutation group itself, since in this case the commutation $U A_G = A_G U$ holds trivially.
\end{ex}

\begin{ex}
\label{qgh}
A closely related class of examples is given by quantum graph \emph{homomorphism} rather than isomorphism. Given graphs $G$ and $G'$, the C*-algebra is quantum graph homomorphisms is the free hypergraph C*-algebra generated by a family of projections $(p_{ij})$ indexed by $i\in G$ and $j\in G'$, such that if $i_1\sim i_2$ and $j_1\not\sim' j_2$, then again we have orthogonality $p_{i_1 j_2} p_{i_2 j_2} = 0$~\cite{quantum_graph_hom}. In this case, we are not aware of an alternative formulation in terms of the adjacency matrices analogous to the one from Example~\ref{qgi}. Then again there are various possible definitions for when one says that a quantum graph homomorphism exists, depending on whether one requires the free hypergraph C*-algebra to have a finite-dimensional representation as in~\cite{variations}, or a representation with a tracial state, or just to be nonzero~\cite{quantum_graph_hom}.

In the special case where $G' = K_n$ is the complete graph, the existence of a quantum graph homomorphism $G \to K_n$ means (by definition) that $G$ is \emph{quantum $n$-colourable}, for which one can again consider the same variations~\cite{games}.
\end{ex}

The following is a variation on Kirchberg's formulation of the Connes Embedding Problem, i.e.~the QWEP conjecture~\cite{qwep}. Although---as we will see in the proof---the corresponding free hypergraph C*-algebra is really just a group C*-algebra, we nevertheless present the argument in the hope that future work will be able to reduce the size of the relevant hypergraph (Figure~\ref{qwep}) further, possibly resulting in a free hypergraph C*-algebra that is not a group C*-algebra.

\begin{prop}
\label{qwep_rfd}
The Connes Embedding Problem has a positive answer if and only if the free hypergraph C*-algebra associated to Figure~\ref{qwep} is residually finite-dimensional.
\end{prop}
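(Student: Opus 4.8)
The plan is to recognize $C^*(H)$ for the $H$ of Figure~\ref{qwep} as the maximal tensor square of a group C*-algebra, and then to read off the equality of the minimal and maximal tensor norms as Kirchberg's QWEP reformulation of the Connes Embedding Problem. First I would identify the algebra. The $5\times 5$ array of vertices splits as $(2+3)\times(2+3)$, and the four rectangular hyperedge regions are the four pairs of measurement settings of a bipartite scenario in which each of two parties performs one $2$-outcome and one $3$-outcome measurement; the vertex in position $(i,j)$ stands for a product projection $P^A_i P^B_j$, while the remaining (coloured) hyperedges impose exactly the partition-of-unity and marginal relations that force the two families $(P^A_i)_i$ and $(P^B_j)_j$ to commute and to arise from honest partitions of unity. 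This is precisely the commuting-product instance of the construction behind Lemma~\ref{permanence}, so that by comparison of universal properties
\[
  C^*(H)\;\cong\;C^*(G)\otimes_{\max}C^*(G)\;\cong\;C^*(G\times G),\qquad G:=\Z_2\ast\Z_3\cong\mathrm{PSL}_2(\Z),
\]
where the second isomorphism is the general identity $C^*(G_1)\otimes_{\max}C^*(G_2)\cong C^*(G_1\times G_2)$. This is what is meant by the algebra being ``really just a group C*-algebra'', and here $C^*(G)\cong\C^2\ast\C^3$ is a unital free product of finite-dimensional commutative C*-algebras.

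Two facts about $G$ drive the argument. Since $C^*(G)$ is a unital free product of finite-dimensional C*-algebras it is residually finite-dimensional (Exel--Loring); as the minimal tensor product of RFD algebras is again RFD, the same holds for $C^*(G)\otimes_{\min}C^*(G)$. Moreover $C^*(G)$ has Kirchberg's local lifting property, again because it is a free product of nuclear C*-algebras, and $G=\mathrm{PSL}_2(\Z)$ contains a nonabelian free group $F_2$ of finite index. These are the inputs that let me invoke the relevant form of Kirchberg's theorem~\cite{qwep,about_connes}: the Connes Embedding Problem has a positive answer if and only if $\|\cdot\|_{\min}=\|\cdot\|_{\max}$ on the algebraic tensor square $C^*(G)\odot C^*(G)$.

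Granting this, the proposition reduces to the equivalence ``$\|\cdot\|_{\min}=\|\cdot\|_{\max}$ on $C^*(G)\odot C^*(G)$ if and only if $C^*(G)\otimes_{\max}C^*(G)$ is RFD''. If the two norms agree, then $C^*(G)\otimes_{\max}C^*(G)=C^*(G)\otimes_{\min}C^*(G)$ is RFD by the previous paragraph. Conversely, if $C^*(G)\otimes_{\max}C^*(G)$ is RFD, then its finite-dimensional representations separate points, so $\|\cdot\|_{\max}$ is the supremum of $\|\pi(\cdot)\|$ over finite-dimensional $\pi$; but such a $\pi$ is, by the universal property of the maximal tensor product, a pair of finite-dimensional representations of $C^*(G)$ with commuting ranges, and it therefore factors through the minimal tensor product of its two finite-dimensional image algebras, hence through $C^*(G)\otimes_{\min}C^*(G)$. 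Thus $\|\cdot\|_{\max}\le\|\cdot\|_{\min}$, and the two norms agree.

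The step I expect to be the main obstacle is the careful identification in the first paragraph: one must verify that the coloured hyperedges of Figure~\ref{qwep} encode exactly the marginal and commutation relations of the commuting-product construction, with neither spurious nor missing relations, so that the universal property genuinely returns $C^*(G)\otimes_{\max}C^*(G)$ on the nose. The second delicate point is the transfer underlying the cited reformulation, namely passing the equality of tensor norms between $C^*(F_2)$, where Kirchberg's statement is classical, and $C^*(G)$; this uses the local lifting property of $C^*(G)$ together with the finite index of $F_2$ in $G$ and the resulting (co)restriction maps on both the minimal and the maximal tensor squares.
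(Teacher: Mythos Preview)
Your approach is correct and shares the paper's key identification: $C^*(H)\cong C^*\bigl((\Z_2\ast\Z_3)^{\times 2}\bigr)$, which the paper obtains by recognizing the hypergraph as the one from~\cite[Figure~7(g)]{afls} after discarding redundant edges. Where you diverge is in the second half. The paper simply cites the known formulation that the Connes Embedding Problem is equivalent to $C^*(\F_2\times\F_2)$ being residually finite-dimensional~\cite{about_connes}, and then invokes the standard fact that residual finite-dimensionality of maximal group C*-algebras is invariant under passing to finite-index supergroups, applied to $\F_2\times\F_2\leq(\Z_2\ast\Z_3)^{\times 2}$. You instead route through the tensor-norm formulation: you use the LLP of $C^*(\Z_2\ast\Z_3)$ and the finite-index inclusion $\F_2\leq\Z_2\ast\Z_3$ to transfer Kirchberg's $\min=\max$ statement to $C^*(G)\odot C^*(G)$, and then prove from scratch that $\min=\max$ is equivalent to the maximal tensor square being RFD (via Exel--Loring for one direction and factoring finite-dimensional representations through $\otimes_{\min}$ for the other). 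Your route is more self-contained and makes the role of RFD versus tensor norms explicit, whereas the paper's route is shorter by a factor of several and offloads that equivalence to the literature; the mathematical content is ultimately the same.
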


\newcommand{\F}{\mathbb{F}}

\begin{proof}
The Connes Embedding Problem is equivalent to the question whether the maximal group C*-algebra $C^*(\F_2 \times \F_2)$ is residually finite-dimensional~\cite{about_connes}. Now in place of $\F_2 \times \F_2$, one can just as well take any other group which contains this one as a subgroup of finite index; such a group is e.g.~$G := (\Z_2 \ast \Z_3)^{\times 2} \cong PSL_2(\Z)^{\times 2}$.

Now the free hypergraph C*-algebra associated to the hypergraph depicted in Figure~\ref{qwep} is isomorphic to $C^*(G)$, as one can see by first drawing it in a way analogous to~\cite[Figure~7(g)]{afls}, and then noting that some of the edges are redundant, in the sense that the partition of unity relations which they implement are implied by the others via simple linear combinations~\cite[Appendix~C]{afls}.
\end{proof}

We will encounter further ways to construct free hypergraph C*-algebras in Theorems~\ref{colimthm} and~\ref{syncthm}.

\subsection{Permanence properties}

As we have seen in the above examples, one often wants to impose additional relations on the generating projections of a free hypergraph C*-algebra. We now show that one frequently obtains another free hypergraph C*-algebra.

\begin{lem}
\label{permanence}
Let $C^*(H)$ be a free hypergraph C*-algebra and $v,w\in V(H)$. Then imposing either of the following additional relations results in another free hypergraph C*-algebra:
\begin{enumerate}
\item $p_v = 0$;
\item\label{equal} $p_v = p_w$;
\item\label{orthogonal} $p_v p_w = 0$;
\item $p_v \le p_w$.
\item\label{vwcomm} $p_v p_w = p_w p_v$.
\item $p_v + p_w \leq 1$.
\end{enumerate}
\end{lem}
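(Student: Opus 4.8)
The plan is to realize each of these quotients of $C^*(H)$ as the free hypergraph C*-algebra $C^*(H')$ of a larger hypergraph $H'$, obtained from $H$ by adjoining a few auxiliary vertices together with a few new edges. In every case the verification is the same in spirit: by the universal property, a unital $*$-homomorphism $C^*(H') \to A$ is the same thing as an assignment of projections in $A$ to the vertices of $H'$ satisfying all partition of unity relations, and I would check that restricting such an assignment along $V(H) \subseteq V(H')$ yields a bijection onto those assignments for $H$ that additionally satisfy the relation in question. Concretely this amounts to two observations: first, that the new edge relations force the desired relation between $p_v$ and $p_w$ (on top of the relations already present in $H$); and second, that whenever an assignment for $H$ satisfies the desired relation, the values of the auxiliary projections are uniquely determined and consistent, so the extension to $V(H')$ exists and is unique. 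Matching these universal properties identifies $C^*(H')$ with the quotient of $C^*(H)$ by the relation.

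The linear cases are handled by introducing slack projections. For (c)---and simultaneously for (f), since $p_v + p_w \le 1$ is equivalent to $p_v p_w = 0$ for projections, as recalled in the footnote to Example~\ref{trivial}---I would adjoin one new vertex $u$ and the single edge $\{v,w,u\}$; then $p_v + p_w + p_u = 1$ forces $p_v + p_w \le 1$, hence orthogonality, and conversely $p_u := 1 - p_v - p_w$ is the unique consistent slack value. For (a) I would adjoin a vertex $u$ with edges $\{u\}$ and $\{v,u\}$, so that $p_u = 1$ and $p_v + p_u = 1$ force $p_v = 0$. For (b) a single new vertex $w'$ with edges $\{w,w'\}$ and $\{v,w'\}$ suffices: the first gives $p_{w'} = 1 - p_w$ and the second gives $p_v = 1 - p_{w'} = p_w$. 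For (d), I would first encode the complement $p_{w'} = 1 - p_w$ via a vertex $w'$ and edge $\{w,w'\}$, and then impose $p_v \perp p_{w'}$ exactly as in (c), via a vertex $u$ and edge $\{v,w',u\}$; since $p_v(1-p_w) = 0$ is precisely $p_v \le p_w$, this is what is needed.

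The one genuinely non-obvious case is (e), commutation, which I expect to be the main obstacle precisely because commutativity is not a linear condition and so cannot be imposed by a single partition of unity. The key idea is to force the existence of the joint spectral decomposition that two commuting projections automatically possess. I would adjoin four new vertices $r_{11}, r_{10}, r_{01}, r_{00}$ together with the three edges
\[
\{r_{11}, r_{10}, r_{01}, r_{00}\}, \qquad \{v, r_{01}, r_{00}\}, \qquad \{w, r_{10}, r_{00}\}.
\]
The first edge makes the four new projections mutually orthogonal with sum $1$; the second forces $p_v = r_{11} + r_{10}$ and the third forces $p_w = r_{11} + r_{01}$. As $p_v$ and $p_w$ are then each sums of projections drawn from a common orthogonal family, a direct computation gives $p_v p_w = r_{11} = p_w p_v$, so they commute. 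Conversely, if $p_v$ and $p_w$ commute in $A$, the four products $p_v p_w$, $p_v(1-p_w)$, $(1-p_v)p_w$, $(1-p_v)(1-p_w)$ are projections satisfying all three relations, and one checks they are the unique such values, giving the required unique extension.

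It remains only to note that each $H'$ is again a legitimate hypergraph, since every adjoined vertex lies in at least one new edge, and that in each case the uniqueness of the slack projections is what yields injectivity of the restriction map while their existence yields surjectivity. I expect the bulk of the write-up to concentrate on case (e): presenting the four-vertex encoding and verifying both that the relations imply commutation and that the decomposition $r_{11} = p_v p_w$, and so on, is forced. The remaining cases are short enough to be compressed into a single list of the constructions, noting that (c) and (f) coincide and that (b) and the two-sided version of (d) are interchangeable.
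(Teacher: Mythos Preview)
Your proposal is correct and follows essentially the same approach as the paper: the crucial case (e) uses the identical joint-spectral-decomposition gadget (your $r_{ij}$ are the paper's $vw,\bar v w,v\bar w,\bar v\bar w$), and (b) and (c) coincide verbatim with the paper's constructions. The remaining cases differ only in cosmetic choices---for (a) the paper simply deletes $v$ rather than adjoining a unit slack, for (d) it uses an existing edge through $w$ rather than first creating the complement $w'$, and for (f) it prunes the $vw$ vertex from the (e) gadget rather than invoking the equivalence with (c)---but these are equally valid variants, and your identification of (f) with (c) via the footnoted fact is arguably cleaner.
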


\begin{proof}
\begin{enumerate}
\item We simply remove vertex $v$ from $H$.
\item We add one additional vertex $u$ and the two edges $\{u,v\}$ and $\{u,w\}$, implementing the relations $p_u + p_v = 1$ and $p_u + p_w = 1$. Clearly such a projection $p_u$ exists if and only if $p_v = p_w$, and then it is unique.
\item We add one additional vertex $u$ and the edge $\{u,v,w\}$. Then a projection $p_u$ satisfying the relation $p_u + p_v + p_w = 1$ exists if and only if $p_v$ and $p_w$ are orthogonal, which means $p_v p_w = 0$. In this case, the projection $p_u$ is clearly unique.
\item Let $e\in E(H)$ be any edge with $w\in e$. Then $p_v \le p_w$ is equivalent to $p_v p_{w'} = 0$ for all $w'\in e\setminus\{w\}$, so that we can apply~\ref{orthogonal}.
\item We add four additional vertices $vw$, $\bar{v}w$, $v\bar{w}$ and $\bar{v}\bar{w}$, together with edges
\[
	\{v,\bar{v}w,\bar{v}\bar{w}\}, \qquad \{w,v\bar{w},\bar{v}\bar{w}\}, \qquad \{vw,\bar{v}w,v\bar{w},\bar{v}\bar{w}\}.
\]
Then we obtain the relations $p_v = p_{vw} + p_{v\bar{w}}$ and $p_w = p_{vw} + p_{\bar{v}w}$, which implies commutativity thanks to $p_{vw} + p_{v\bar{w}} + p_{\bar{v}w} + p_{\bar{v}\bar{w}} = 1$, which implies pairwise orthogonality. Conversely, given $p_v p_w = p_w p_v$, the new projections are uniquely determined as products, such as $p_{\bar{v}w} = (1 - p_v)p_w$.\qedhere
\item This is as in~\ref{vwcomm}, but with the vertex $vw$ removed.
\end{enumerate}
\end{proof}

\begin{lem}
\label{3wlog}
Every free hypergraph C*-algebra is isomorphic to some other free hypergraph C*-algebra $C^*(H)$, where all edges of $H$ have cardinality $3$, and any two edges intersect in at most one vertex.
\end{lem}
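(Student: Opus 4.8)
The plan is to normalize an arbitrary $H$ in two stages—first forcing every edge to have cardinality exactly $3$, then forcing any two edges to meet in at most one vertex—while keeping the isomorphism type of $C^*(H)$ fixed, leaning on Lemma~\ref{permanence} and Example~\ref{du} throughout. By Example~\ref{du} it suffices to treat each connected component separately, since a disjoint union of hypergraphs satisfying both conditions again satisfies them and corresponds to a free product. After a preprocessing step that eliminates cardinality-$1$ edges (an edge $\{v\}$ forces $p_v = 1$, so I substitute and propagate, at worst deleting vertices and shrinking other edges), I am left with components whose edges all have cardinality $\ge 2$, and I note that an isolated cardinality-$1$ edge contributes only a $\mathbb{C}$ free-product factor, which is the unit for the coproduct and may be dropped.

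For edges of large cardinality I would peel off one vertex at a time. Given $e = \{v_1,\dots,v_n\}$ with $n \ge 4$, introduce a fresh complementary pair $u,\bar u$ and replace $e$ by the cardinality-$3$ edge $A = \{v_1,v_2,\bar u\}$, the cardinality-$(n-1)$ edge $B = \{u,v_3,\dots,v_n\}$, and the relations $p_{v_1}\le p_u$, $p_{v_2}\le p_u$, $p_u p_{\bar u}=0$. The point is that each of these last relations is realized by a cardinality-$3$ gadget via Lemma~\ref{permanence}\ref{orthogonal} and part~(d)—reading $p_{v_i}\le p_u$ off the edge $B$ as orthogonality of $p_{v_i}$ to $p_{v_3},\dots,p_{v_n}$—so that no cardinality-$2$ edge is ever created and every auxiliary vertex is a uniquely determined function of the $p_{v_i}$. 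A short computation in both directions (using $p_u = p_{v_1}+p_{v_2}$, $p_{\bar u}=1-p_u$) shows that $A$, $B$ and these relations are equivalent to $\sum_i p_{v_i}=1$, so the C*-algebra is unchanged; recursing on $B$ terminates at cardinality $3$, and the step is self-contained even when $e$ is the only edge of its component.

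This leaves the cardinality-$2$ edges, and here lies the main obstacle: realizing a single complementation relation $p_a+p_b=1$ using cardinality-$3$ edges alone, without introducing spurious free generators. When at least one endpoint, say $b$, already lies in another (cardinality-$3$) edge, I can add the edge $\{a,b,c\}$ and force $p_c=0$: the edge gives $p_c p_b = 0$, while Lemma~\ref{permanence}(d) applied at $b$'s other edge gives $p_c \le p_b$, and the two together force $p_c=0$, hence $p_a+p_b=1$. In a connected component with at least two edges such an anchor always exists, so the only genuinely obstructed case is an isolated cardinality-$2$ edge, i.e.\ a free-product factor $\mathbb{C}^2\cong C^*(\Z_2)$ with no surrounding structure to exploit. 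I would dispatch these finitely many atomic factors explicitly: $\mathbb{C}^n$ for $n\ge 3$ comes from a single cardinality-$n$ edge after peeling, and $\mathbb{C}^2$ I would realize as the maximal commutative quotient—obtained by imposing commutativity on all pairs of generators via Lemma~\ref{permanence}\ref{vwcomm}, which keeps us among free hypergraph C*-algebras (its one cardinality-$4$ edge being re-split by the peeling above)—of a small hypergraph with exactly two characters, for instance the edges $\{a,b,c\}$, $\{a,b,d\}$, $\{c,d,e\}$; the commutative quotient is then $C$ of a two-point space, namely $\mathbb{C}^2$.

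Finally I would enforce the intersection condition. If two cardinality-$3$ edges $e=\{u,v,x\}$ and $f=\{u,v,y\}$ share two vertices, I replace $f$ by a copy $\{u',v',y\}$ on fresh vertices and re-identify $p_{u'}=p_u$, $p_{v'}=p_v$ using the cardinality-$3$ realization of equality (two applications of Lemma~\ref{permanence}(d), one anchored at $e$ and one at the new edge); this restores the relation carried by $f$ while removing the shared pair. The thing to verify is that the orthogonality gadgets introduced here do not themselves create new edge pairs meeting in two vertices, which is arranged by always using fresh auxiliary vertices so that a suitable count of excess intersections strictly decreases. I expect this bookkeeping to be merely tedious, whereas the genuinely delicate point is the isolated complementation above, since it is exactly where the tension between ``cardinality $3$'' and ``no new free generators'' becomes unavoidable.
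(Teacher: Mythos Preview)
Your argument is broadly correct but much more elaborate than the paper's, and it has one small gap. For the cardinality reduction, the paper first splits any edge $e$ with $|e|>3$ as $e_1\cup e_2$ with $|e_i|\ge 2$ and adds fresh $s,t$ with edges $e_1\cup\{s\}$, $e_2\cup\{t\}$, $\{s,t\}$; since $p_s,p_t$ are then uniquely $\sum_{e_2}p_v$ and $\sum_{e_1}p_v$, the algebra is unchanged. Then every edge of cardinality $<3$ is padded with fresh vertices, each forced to vanish by attaching the gadget of Figure~\ref{gadget} (three edges $\{1,2,c\},\{2,3,c\},\{1,3,c\}$, which force $p_1=p_2=p_3=0$). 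This single device handles all small edges uniformly and removes the need for your cardinality-$2$ case analysis---including the detour through commutativity for the isolated $\mathbb{C}^2$ factor, which is in any case redundant: your hypergraph $\{a,b,c\},\{a,b,d\},\{c,d,e\}$ already satisfies $C^*(H)\cong\mathbb{C}^2$, since $p_c=p_d$ and $2p_c+p_e=1$ force $p_c=p_d=0$. Your gap is the claim that ``in a connected component with at least two edges such an anchor always exists'': this fails when every edge of the component has cardinality $2$ (e.g.\ edges $\{a,b\},\{b,c\}$), though such components are easily seen to contribute a free factor of $\mathbb{C}^2$ or $0$ and can be absorbed into your isolated case.

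For the intersection condition your construction is in fact more careful than the paper's. The paper replaces the pair $\{t,u,v\},\{u,v,w\}$ by $\{t,u,v\},\{t,x,y\},\{x,y,w\}$ with $x,y$ fresh, arguing that existence of such $p_x,p_y$ is equivalent to $p_t=p_w$; but $p_x,p_y$ are not \emph{uniquely} determined (only $p_x+p_y=1-p_t$ is), and the two new edges still meet in $\{x,y\}$, so as written this step neither preserves the isomorphism type nor achieves the stated goal. Your duplicate-and-reidentify scheme, with equality realized as two-sided $\le$ via orthogonality gadgets, does pin down every auxiliary projection, and your termination sketch goes through once you stipulate that an orthogonality gadget is added only when the two vertices involved are not already in a common edge.
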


This is similar to the reduction of SAT to 3-SAT in computational complexity. See Remark~\ref{satisfiability} for more on this.

\begin{proof}
Let $e\in E(H)$ be an edge of the largest cardinality, and suppose that $|e| > 3$. Write this edge as a disjoint union $e = e_1 \cup e_2$, such that $|e_1| \ge 2$ and $|e_2| \ge 2$. Then introduce two new vertices $s,t$ and three new edges
\[
	e'_1 := e_1 \cup \{s\},\qquad e'_2 := e_2 \cup \{t\}, \qquad \hat{e} := \{s,t\}.
\]
The resulting partition of unity relations are $\sum_{v\in e_1} p_v + p_s = 1$ and $\sum_{v\in e_2} p_v + p_t = 1$ as well as $p_s + p_t = 1$. It is easy to see that such $p_s$ and $p_t$ exist if and only if $\sum_v p_v = 1$, which is the original partition of unity relation. We thus obtain a new hypergraph with an isomorphic C*-algebra, but with one edge of cardinality $|e|$ less. Hence iterating this process will result in an equivalent hypergraph where all edges have cardinality at most $3$.

We next show that one can achieve cardinality equal to $3$. If there is an edge of cardinality $0$, then we have $C^*(H) = 0$ anyway, in which case can be replaced by any $H'$ with $C^*(H')$ and such that all edges have cardinality $3$, which happens e.g.~for the hypergraph consisting of the four faces of a tetrahedron. Otherwise, we take all edges of cardinality less than $3$ and add new vertices until they all have cardinality $3$. For each new vertex $v$, we need to guarantee that $P_v = 0$; this can be achieved e.g.~by attaching a copy of the hypergraph depicted in Figure~\ref{gadget}.

Finally, for all pairs of edges that intersect in two vertices, say $e = \{t,u,v\}$ and $e' = \{u,v,w\}$, we remove the edge $e'$ and replace it by two new vertices $x$ and $y$ and edges $\{t,x,y\}$ and $\{x,y,w\}$. This works because under the assumption $p_t + p_u + p_v = 1$, there exist projections $p_x$ and $p_y$ satisfying $p_t + p_x + p_y = 1 = p_x + p_y + p_w$ if and only if $p_t = p_w$, or equivalently $p_u + p_v + p_w = 1$.
\end{proof}

\begin{figure}
\begin{center}
\begin{tikzpicture}[scale=1.4]
\node[draw,shape=circle,fill,scale=.5] at (0,0) {};
\node[draw,shape=circle,fill,scale=.5] (v) at (3,0) {};
\node[draw,shape=circle,fill,scale=.5] at (1.5,2.60) {};
\node[draw,shape=circle,fill,scale=.5] at (1.5,0.87) {};
\node[above left of=v,node distance=3mm] {$v$};
\draw[thick,blue] plot [smooth cycle,tension=.7] coordinates { (-.2,-.2) (3.2,-.2) (1.5,1.07) } ;
\draw[rotate around={120:(1.5,0.87)},thick,blue] plot [smooth cycle,tension=.7] coordinates { (-.2,-.2) (3.2,-.2) (1.5,1.07) } ;
\draw[rotate around={240:(1.5,0.87)},thick,blue] plot [smooth cycle,tension=.7] coordinates { (-.2,-.2) (3.2,-.2) (1.5,1.07) } ;
\end{tikzpicture}
\end{center}
\caption{Hypergraph $H$ with the property that $p_v = 0$ in $C^*(H)$.}
\label{gadget}
\end{figure}
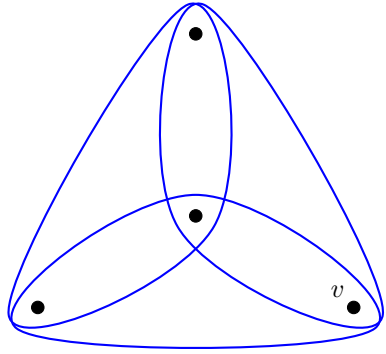

\subsection{Hypergraph C*-algebras via finite colimits}

As we saw in Example~\ref{trivial}, the finite-dimensional commutative C*-algebras $\C^n$ are trivially all free hypergraph C*-algebras.

\begin{thm}
\label{colimthm}
Up to isomorphism, the class of free hypergraph C*-algebras coincides with the class of finite colimits in $\Calg$ of finite-dimensional commutative C*-algebras.
\end{thm}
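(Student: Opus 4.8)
The plan is to prove the two inclusions separately. Throughout I use the following facts, essentially from Example~\ref{trivial}: the finite-dimensional commutative C*-algebras are exactly the $\C^n$, the free hypergraph C*-algebra of a single edge on $n$ vertices is $\C^n$, and in particular $\C^2$ is the universal unital C*-algebra on a single projection. I also use that a unital $*$-homomorphism $\C^n \to \C^m$ is the same datum as a function $\psi\colon \underline m \to \underline n$ of the minimal-projection index sets, sending the $k$-th minimal projection of $\C^n$ to $\sum_{l\in\psi^{-1}(k)}$ of those of $\C^m$ (so the subcategory is equivalent to $\mathbf{FinSet}^{\mathrm{op}}$).

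For the inclusion of free hypergraph C*-algebras into the finite colimits, I would exhibit $C^*(H)$ as the colimit of the finite diagram built from the incidence structure of $H$: place a copy of $\C^2$ at each vertex $v\in V(H)$, a copy of $\C^{|e|}$ at each edge $e\in E(H)$, and for each incidence $v\in e$ a morphism $\C^2 \to \C^{|e|}$ classifying the $v$-indexed coordinate projection. Unwinding the universal property, a cocone from this diagram to a C*-algebra $R$ consists of a projection $q_v\in R$ for each vertex together with a partition of unity on each edge whose $v$-component equals $q_v$; this is precisely a family $(q_v)$ of projections with $\sum_{v\in e} q_v = 1$ for every $e$, i.e.\ a homomorphism $C^*(H)\to R$. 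Hence the colimit is $C^*(H)$, and since $H$ is finite this is a finite colimit of the finite-dimensional commutative algebras $\C^2$ and $\C^{|e|}$ (with $\C^0=0$ for an empty edge handled correctly, as $0$ is terminal in $\Calg$).

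The reverse inclusion is where the real content lies, and I expect it to be the main obstacle. Given a finite diagram $D$ of finite-dimensional commutative C*-algebras with objects $\C^{n_i}$, I would write down a hypergraph directly, rather than appeal to the decomposition of a colimit into coproducts and coequalizers. Take as vertices all minimal projections $q^{(i)}_k$ of all objects, with one \emph{type-1} edge $\{q^{(i)}_k : k\}$ per object, enforcing $\sum_k q^{(i)}_k = 1$. The cocone compatibility for a morphism $\phi\colon \C^{n_i}\to\C^{n_j}$ of the diagram, corresponding to $\psi\colon \underline{n_j}\to\underline{n_i}$, reads $q^{(i)}_k = \sum_{l\in\psi^{-1}(k)} q^{(j)}_l$ for each $k$. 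The key manoeuvre is that, modulo the already-present relation $\sum_l q^{(j)}_l = 1$, this is equivalent to the partition of unity relation
\[
	q^{(i)}_k + \sum_{l\in \underline{n_j}\setminus\psi^{-1}(k)} q^{(j)}_l = 1,
\]
obtained by replacing the sum over $\psi^{-1}(k)$ by the complementary sum: one direction is a subtraction, and the reverse direction uses that orthogonality of the summands is automatic once the equality holds. Adding one such \emph{type-2} edge for each morphism and each $k$ produces a finite hypergraph $H$, and I would finish by checking that cocones on $D$ into $R$ correspond bijectively and naturally to edge-indexed partitions of unity in $R$, so that $\operatorname{colim} D \cong C^*(H)$.

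The points needing care, none deep but all necessary to turn the construction into a genuine isomorphism, are: that the complementation equivalence reproduces the colimit relations in both directions, including that the orthogonality forced by a type-2 edge is implied by (and hence consistent with) the colimit; that imposing the type-2 relations only for a generating set of morphisms of the index category suffices, the rest following by functoriality of the cocone condition; and the degenerate cases such as an object $\C^0=0$ or an edge of cardinality $0$, both of which correctly force the colimit to vanish.
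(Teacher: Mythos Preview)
Your proposal is correct and follows essentially the same approach as the paper: the first direction uses the identical bipartite diagram of $\C^2$'s at vertices and $\C^{|e|}$'s at edges, and the second direction uses the same complementation trick, turning the cocone condition $q^{(i)}_k = \sum_{l\in\psi^{-1}(k)} q^{(j)}_l$ into the partition-of-unity edge $\{q^{(i)}_k\}\cup\{q^{(j)}_l : l\notin\psi^{-1}(k)\}$. The paper even makes the same observation that your type-1 edges arise as the special case of type-2 edges for identity morphisms.
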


Intuitively, the reason is that every such colimit has a presentation given by a finite set of projections as generators, and relations of two types: one saying that a certain subset of generating projections should form a partition of unity; and one type saying that some two generating projections should be equal. The latter type reduces to the former by Lemma~\ref{permanence}\ref{equal}.

\begin{proof}
We first show how to express $C^*(H)$ as such a colimit for a given hypergraph $H = (V,E)$. We can present $C^*(H)$ as the unital C*-algebra with a family of generators $(p_{v,e})$ where $v\in V$ and $e\in E$ are such that $v\in e$, subject to the relations that
\begin{equation}
\label{ve}
	\sum_{v\in e} p_{v,e} = 1
\end{equation}
for all $e\in E$, as well as $p_{v,e} = p_{v,e'}$ for all $v\in V$ and $e,e'\in E$ with $v\in e,e'$. In other words, $C^*(H)$ is the unital free product of the $\C^e$ for all $e\in E$, subject to suitable identifications of their generating projections. This means that $C^*(H)$ is canonically isomorphic to the colimit C*-algebra of a diagram looking like
\[\begin{tikzcd}
	\C^2 \ar{rr} \ar{ddrr} && \C^{e} \\
	\vdots && \vdots \\
	\C^2 \ar{uurr} && \C^{e'}
\end{tikzcd}\]
where the copies of $\C^2$ on the left are indexed by the set of vertices $v\in V$, and the objects on the right by the edges $e\in E$ and are given by the C*-algebras freely generated by the above partitions of unity~\eqref{ve}. There is an arrow between $v$ and $e$ if and only if we have incidence $v\in e$. The corresponding $*$-homomorphism $\C^2 \to \C^n$ is the classifying homomorphism of $p_{v,e}$. By construction, the colimit C*-algebra of this diagram has the same universal property as $C^*(H)$ in the presentation above.

The converse direction is a bit trickier. Let $\mathsf{J}$ be a finite category indexing a diagram $D : \mathsf{J} \to \Calg$ such that every $D(J)$ for $J\in\mathsf{J}$ is commutative finite-dimensional. By definition of the colimit, homomorphisms $\colim{D}\to A$ classify tuples of homomorphisms $(\alpha_J : D(J) \to A)_{J\in\mathsf{J}}$ such that for every $f : J \to J'$, we have $\alpha_{J'} \circ D(f) = \alpha_J$. Homomorphisms $D(J) \to A$ are the same thing as partitions of unity in $A$ indexed by $\spec{D(J)}$, and the $D(f) : D(J) \to D(J')$ are determined by $\spec{f} : \spec{D(J')} \to \spec{D(J)}$. Now consider the hypergraph $H$ with vertices
\[
	V(H) := \coprod_{J\in \mathsf{J}} \spec{D(J)},
\]
and edges $E(H) := \{e_{f,v}\}$ indexed by all morphisms $f : J \to J'$ in $\mathsf{J}$ and all $v\in \spec{D(J)}$, where
\[
	e_{f,v} := \{ v \} \cup \{\: v' \in \spec{D(J')} \mid \spec{D(f)} \neq v \:\}.
\]
In particular for $f = \id{J}$ and any $v\in\spec{D(J)}$, the associated edge contains precisely the elements of $\spec{D(J)}$.

We now verify that $C^*(H)$ has the same universal property as the original colimit. By construction, into every $A \in \Calg$, $C^*(H)$ classifies families of homomorphisms $(\alpha_J : D(J) \to A)_{J\in\mathsf{J}}$ satisfying the additional compatibility requirement that for every morphism $f : J \to J'$ and element $v \in \spec{D(J)}$, we have 
\[
\alpha_J(e_v) + \sum_{v'\in J' \: : \: \spec{D(f)}(v') \neq v} \alpha_{J'}(v') = 1.
\]
But since the $\alpha_{J'}(v')$ are assumed to form a partition of unity as $v'\in \spec{D(J')}$ varies, this equation is equivalent to $\alpha_J(e_v) = \alpha_{J'}(\spec{D(f)}(v))$, or equivalently $\alpha_J = \alpha_{J'} \circ D(f)$ as described above.
\end{proof}

\begin{rem}
The above result does \emph{not} imply that the class of free hypergraph C*-algebras is closed under finite colimits, since a finite diagram of free hypergraph C*-algebras does not need to arise from a diagram of diagrams.
\end{rem}

\begin{prob}
What is an explicit example of a finite colimit of free hypergraph C*-algebras that is not itself isomorphic to a free hypergraph C*-algebra?
\end{prob}

\subsection{Hypergraph C*-algebras via synchronous nonlocal games}

A \emph{nonlocal game} for two players $A$ and $B$ is a cooperative game specified by the following data: finite sets\footnote{In general, one can also take different sets (of different cardinalities) for the two players. But as far as the standard properties of nonlocal games are concerned, there is no loss of generality in assuming that both players have the same sets of inputs and outputs.} $I$ and $O$ of \emph{inputs} and \emph{outputs} for each player, and a map
\[
	\lambda : I \times I \times O \times O \to \{0,1\}
\]
which specifies which combinations of inputs and outputs are considered winning. A round of the game consists of a referee choosing $x\in I$ and $y\in I$ and communicating the value to player $A$ and player $B$, respectively; the two players must then return respective output values $a \in O$ and $b\in O$ to the referee. The players should try to coordinate these values in such a way that the winning condition $\lambda(x,y,a,b) = 1$ is satisfied for any choice of input values $a,b\in O$. The game is \emph{winnable} if there is a strategy which achieves this. It is \emph{synchronous}~\cite[p.~4]{games} if $\lambda(x,x,a,b) = \delta_{a,b}$ holds for every input $x\in I$. Intuitively, this means that the two players must definitely agree on the answer when asked the same question, because otherwise they lose.

Nonlocal games are studied in particular in quantum information theory, where the coordination of the two players may be aided by making use of shared quantum entanglement, which allows the players to win certain nonlocal games that would not be winnable without quantum entanglement.

In general it is a difficult question to determine whether a given game is winnable with the help of quantum entanglement, and there also are various slightly different definitions of what types of quantum entanglement are allowed, concerning e.g.~whether infinite-dimensional Hilbert spaces are permitted or not~\cite{embedding_theorem}. In order to address subtleties of this kind, and also as a general tool for the study of synchronous nonlocal games,~\cite[p.~9]{games} has introduced a $*$-algebra associated to each such game. If we take its C*-completion, then it is the universal C*-algebra generated by projections $\{p_{x,a} \, : \, x\in I,\, a\in O\}$ subject to the relations
\begin{align}
\begin{split}
\label{syncgame}
	\sum_{a\in O} p_{x,a} & = 1 \qquad \forall x\in I, \\
	\qquad p_{x,a} p_{y,b} & = 0 \qquad \textrm{whenever } \lambda(x,y,a,b) = 0.
\end{split}
\end{align}
In order to write it as a free hypergraph C*-algebra, we use the proof of Lemma~\ref{permanence}\ref{orthogonal}, which results in the following hypergraph: the set of vertices is $O \times I$, plus one additional vertex for every quadruple $(x,y,a,b)$ which satisfies $\lambda(x,y,a,b) = 0$. There is one edge for every $a \in I$, and it is given by $\{ (x,a) \: : \: a \in O \}$; furthermore, there is one edge for every $(x,y,a,b)$ with $\lambda(x,y,a,b) = 0$, and it contains that extra vertex together with $(x,a)$ and $(y,b)$. It is noteworthy that the associated $*$-algebra $\C[H]/I$ defined after~\eqref{CHalg} is canonically isomorphic to the $*$-algebra associated to the game introduced at~\cite[p.~9]{games}: the synchronicity condition guarantees that the orthogonality relations which follow from the partition of unity relations in~\eqref{syncgame} are already contained in the orthogonality relations of~\eqref{syncgame}. Using this fact, it is easy to check that the two $*$-algebras enjoy equivalent universal properties.

\begin{thm}
\label{syncthm}
Up to isomorphism, the class of free hypergraph C*-algebras coincides with the class of C*-algebras of synchronous nonlocal games. Furthermore, it is enough to restrict to synchronous nonlocal games with only $|O| \le 3$ outputs.
\end{thm}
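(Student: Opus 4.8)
The plan is to prove the two inclusions separately, arranging the second one so that it simultaneously yields the bound $|O|\le 3$. The inclusion of synchronous-game C*-algebras into free hypergraph C*-algebras has in effect already been carried out in the discussion preceding the theorem: the relations~\eqref{syncgame} consist of partition of unity relations, which are hyperedges directly, together with orthogonality relations $p_{x,a}p_{y,b}=0$, each of which is encoded by an auxiliary vertex and a three-element hyperedge exactly as in the proof of Lemma~\ref{permanence}\ref{orthogonal}. So the work lies in the converse: realizing an arbitrary $C^*(H)$ as the C*-algebra of a synchronous game with at most three outputs.

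For the converse I would first invoke Lemma~\ref{3wlog} to assume that every edge of $H$ has cardinality exactly $3$ and that any two distinct edges meet in at most one vertex. The structural feature of a synchronous-game hypergraph that must be reproduced is that its partition of unity blocks $\{(x,a):a\in O\}$ are pairwise \emph{disjoint}, whereas the edges of $H$ may share vertices. The remedy is duplication: introduce a generator $p_{e,a}$ for each edge $e\in E(H)$ and each position $a\in\{1,2,3\}$ of a fixed enumeration of the three vertices of $e$, and impose the equalities $p_{e,a}=p_{e',a'}$ whenever the $a$-th vertex of $e$ equals the $a'$-th vertex of $e'$. Collapsing each class of duplicates recovers precisely the presentation of $C^*(H)$, so this duplicated presentation still has $C^*(H)$ as its universal object.

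The crucial observation is that every such equality is expressible through orthogonality alone, hence through a winning predicate. If $v$ is the shared vertex, at position $a$ in $e$ and $a'$ in $e'$, then by the reduction of $p_v\le p_w$ to orthogonality relations in Lemma~\ref{permanence} the relation $p_{e,a}=p_{e',a'}$ is equivalent to the orthogonalities $p_{e,a}p_{e',b}=0$ for all $b\neq a'$ together with $p_{e,b}p_{e',a'}=0$ for all $b\neq a$. I would therefore set $I:=E(H)$ and $O:=\{1,2,3\}$, put $\lambda(e,e,a,b):=\delta_{a,b}$ to enforce synchronicity, declare $\lambda(e,e',a,b):=0$ exactly on the entries just listed, and set $\lambda:=1$ on all remaining quadruples. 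One then matches universal properties: a family of projections in a C*-algebra $A$ satisfying~\eqref{syncgame} for this $\lambda$ is the same as a family satisfying the duplicated presentation, since the within-block orthogonalities required by synchronicity hold automatically in any partition of unity and introduce nothing new. Thus the game C*-algebra is isomorphic to $C^*(H)$, with three outputs.

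The main obstacle, and the reason the sharpened form of Lemma~\ref{3wlog} is needed, is the consistency of $\lambda$. Because distinct edges share at most one vertex, each ordered pair $(e,e')$ contributes at most one shared vertex and hence a single, non-overlapping block of zero entries; this is what makes $\lambda$ well defined, and also symmetric in the sense that $\lambda(e,e',a,b)=0$ if and only if $\lambda(e',e,b,a)=0$, as is required for the orthogonality relations to be self-adjoint. It then remains to check that these zero entries impose exactly the intended equalities and no spurious ones, which follows by noting that the only vanishing entries for a given pair $(e,e')$ are those involving the position of the shared vertex in $e$ or in $e'$.
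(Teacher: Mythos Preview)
Your proof is correct and follows essentially the same approach as the paper: reduce via Lemma~\ref{3wlog} to edges of size three meeting pairwise in at most one vertex, take $I=E(H)$ and $O=\{1,2,3\}$, and define $\lambda$ so that its zero entries encode precisely the orthogonalities equivalent to identifying the duplicated generators at shared vertices. The only cosmetic difference is that your $\lambda$ is the symmetrized version of the paper's (you set $\lambda(e,e',b,a')=0$ for $b\neq a$ in addition to $\lambda(e,e',a,b)=0$ for $b\neq a'$), which yields the same orthogonality relations and hence the same C*-algebra.
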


In quantum information terms, the proof is a translation of contextuality into a nonlocal game, which is conceptually not a new idea~\cite{bcs_games}. However, our translation achieves this for the hypergraph approach to contextuality~\cite{afls}, which results in a more general translation than what seems to be known.

\begin{proof}
We have shown above that the relations~\eqref{syncgame} define a free hypergraph C*-algebra.

Constructing a $3$-outcome synchronous nonlocal game for a given hypergraph $H$ is less obvious. We can assume by Lemma~\ref{3wlog} that $H$ only has edges of cardinality $3$ which pairwise intersect in at most one vertex. We enumerate the vertices in each edge $e$ as $v_{e,1}, v_{e,2}, v_{e,3}$. We then need to construct our nonlocal game such that the winning condition $\lambda$ encodes which vertices coincide, i.e.~when $v_{e,i} = v_{e',j}$ for edges $e,e'\in E(H)$ and $i,j\in \{1,2,3\}$. Although it is not clear how to implement this directly, we can equivalently encode the relations $v_{e,i} \perp v_{e',j'}$ for all $j'\neq j$ in the form of the orthogonality relations of~\eqref{syncgame} as follows. Let the set of inputs by given by the edges, $I := E(H)$, and put
\[
	\lambda(x,y,a,b) := \begin{cases} 0 & \textrm{ if } v_{x,a} = v_{y,b'} \textrm{ for } b'\neq b, \\ 
				1 & \textrm{ otherwise}, \end{cases}
\]
for the winning condition. It is easy to see that this defines a synchronous nonlocal game. Its C*-algebra is given by generating projections $p_{e,i}$ satisfying partition of unity relations $\sum_i p_{e,i} = 1$, as well as orthogonality relations $p_{e,i} \perp p_{e',j}$ whenever there is $j'\neq j$ with $v_{e,i} = v_{e',j'}$.
\end{proof}

The $\lambda$ constructed in the proof has the following alternative definition: if $a\cap b = \emptyset$, then the players always win; if $a = b$, then the players win if and only if they choose the same vertex; otherwise $|a\cap b| = 1$, in which case the players win either if they both choose the common vertex in $a\cap b$, or both do not choose it. In particular, this shows that the winning condition $\lambda$ enjoys the symmetry property $\lambda(x,y,a,b) = \lambda(y,x,b,a)$.

\begin{prob}
Is every free hypergraph C*-algebra isomorphic to one arising from quantum graph isomorphism as in Example~\ref{qgi}? From quantum graph homomorphism as in Example~\ref{qgh}?
\end{prob}

\begin{rem}
	The C*-algebras associated to synchronous nonlocal games have also been generalized to \emph{imitation games} in~\cite[Definition~4.2]{imitation}. By Lemma~\ref{permanence}\ref{orthogonal}, it is easy to see that every imitation game C*-algebra is also a free hypergraph C*-algebra. Conversely, every synchronous nonlocal game C*-algebra, and therefore also every free hypergraph C*-algebra, is an imitation game C*-algebra~\cite[Example~4.4]{imitation}. In conclusion, the class of imitation game C*-algebras also coincides with the class of free hypergraph C*-algebras.
\end{rem}

\section{Undecidable properties and independence of ZFC}
\label{sec_undec}

By the definition of $C^*(H)$ in terms of generators and relations, the unital representations $C^*(H) \to \B(\H)$ are in bijection with families of closed subspaces in $\H$ indexed by the vertices $V(H)$, such that certain subsets of the family correspond to pairwise orthogonal subspaces that span $\H$. As in~\cite{quantum_logic}, we call such a configuration of subspaces a \emph{quantum representation} or just \emph{representation} of $H$, assuming that $\H \neq 0$.

\subsection{Nontriviality}

We have $C^*(H) \neq 0$ if and only if $H$ has a quantum representation in some Hilbert space $\H$; it is enough to assume $\H$ to be separable without loss of generality. Based on results of Slofstra~\cite{embedding_theorem}, we had proven in~\cite[Corollary~11]{quantum_logic} the \emph{inverse sandwich conjecture} from~\cite{afls}:

\begin{thm}
\label{undecidable}
There is no algorithm to determine whether $C^*(H) \stackrel{?}{=} 0$ for a given $H$.
\end{thm}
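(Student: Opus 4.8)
The plan is to reduce from the undecidability result of Slofstra~\cite{embedding_theorem} concerning linear system games, in the form of their associated \emph{solution groups}~\cite{solgroup}. Recall that to a linear binary constraint system one associates a finitely presented group $\Gamma$ containing a distinguished central element $J$ with $J^2 = 1$, and that the system admits a perfect commuting-operator strategy precisely when $J \neq 1$ in $\Gamma$. Slofstra's work shows that the resulting problem ``Is $J \neq 1$ in $\Gamma$?'', as $\Gamma$ ranges over the solution groups obtained from finite linear systems, is undecidable. Since the passage from a finite linear system to the finite presentation of $\Gamma$ is explicit and algorithmic, it suffices to exhibit a computable reduction of this problem to the problem ``Is $C^*(H) = 0$?''.

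For the reduction, I would use the construction recalled in Example~\ref{exsolgroup}: for each solution group $\Gamma$ one obtains, by the recipe of~\cite[Lemma~10]{quantum_logic} together with the permanence properties of Lemma~\ref{permanence}, a hypergraph $H$ whose free hypergraph C*-algebra is isomorphic to the universal C*-algebra of representations of $\Gamma$ in which the central involution is sent to $-1$; that is, $C^*(H) \cong C^*(\Gamma)/(1 + J)$. This construction is manifestly algorithmic in the presentation of $\Gamma$, hence algorithmic in the underlying finite linear system. It therefore remains only to verify the equivalence
\[
	C^*(H) \neq 0 \quad\Longleftrightarrow\quad J \neq 1 \text{ in } \Gamma.
\]

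To see this, note first that the canonical map $\Gamma \to U(C^*(\Gamma))$ is injective, since it factors the left regular representation $\Gamma \to U(C^*_r(\Gamma))$. Thus $J \neq 1$ in $\Gamma$ implies that $J$ is a nontrivial self-adjoint unitary in $C^*(\Gamma)$, so its spectrum is exactly $\{1, -1\}$ and the spectral projection $\tfrac{1}{2}(1 - J)$ is nonzero; this projection generates the ideal $(1+J)$ and witnesses $C^*(\Gamma)/(1+J) \neq 0$. Conversely, if $J = 1$ in $\Gamma$, then every representation of $\Gamma$ sends $J$ to the identity, so the relation $J = -1$ forces $1 = -1$ and the quotient collapses to $0$. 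Combining this equivalence with the computable reduction and Slofstra's theorem yields the claimed undecidability.

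The genuine mathematical difficulty is entirely contained in Slofstra's theorem~\cite{embedding_theorem}, which I would invoke as a black box; the main obstacle on our side is only the bookkeeping needed to guarantee that the group-to-hypergraph construction of Example~\ref{exsolgroup} is uniformly computable and really implements the constraint $J = -1$, together with the spectral observation above. An alternative route, avoiding solution groups, would phrase Slofstra's result directly in terms of synchronous nonlocal games and appeal to Theorem~\ref{syncthm}; this requires first synchronizing the linear system games, which is why the solution-group formulation is more convenient here.
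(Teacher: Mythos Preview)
Your proposal is correct and follows essentially the same route as the paper's cited proof in~\cite[Corollary~11]{quantum_logic}: reduce Slofstra's undecidability of ``$J\neq 1$?'' for solution groups to ``$C^*(H)\neq 0$?'' via the hypergraph construction of~\cite[Lemma~10]{quantum_logic}, using that $C^*(H)\cong C^*(\Gamma)/(1+J)$ is nonzero precisely when $J\neq 1$ in $\Gamma$. One small slip: the projection $\tfrac{1}{2}(1-J)$ does not generate the ideal $(1+J)$ but is rather \emph{orthogonal} to its generator $\tfrac{1}{2}(1+J)$, which is exactly why its nonvanishing witnesses $C^*(\Gamma)/(1+J)\neq 0$; the conclusion is unaffected.
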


In fact, the proof of~\cite[Collary~11]{quantum_logic} based on the methods of~\cite{compute_norm} shows that the problem is nevertheless semi-decidable, since there is a non-terminating algorithm. Since the undecidability proof is via reduction from the word problem for groups, which in turned is undecidable by reduction from the halting problem, we can therefore conclude that $C^*(H) \stackrel{?}{=} 0$ has the Turing degree of the halting problem.

\begin{rem}
The commutative analogue of the decision problem $C^*(H) \stackrel{?}{=} 0$ asks whether there is an assignment of a truth value to each vertex, such that exactly one vertex in every edge is true. Since all our hypergraphs are finite, this is a Boolean satisfiability problem and therefore trivially decidable (but NP-complete, by NP-completeness of 1-in-3-SAT).
\end{rem}

\begin{rem}
\label{satisfiability}
Determining whether $C^*(H) \stackrel{?}{=} 0$ can also be understood as a quantum satisfiability problem~\cite{quantum_logic,quantum_sat}. Due to the equivalence of Lemma~\ref{3wlog}, where we had shown that \emph{every} free hypergraph C*-algebra is computably isomorphic to one where all edges have cardinality $3$, determining whether $C^*(H) \stackrel{?}{=} 0$ is undecidable even when all edges of $H$ have cardinality at most $3$, which is a result of~\cite{quantum_sat}.
\end{rem}

\subsection{Residual finite-dimensionality}

In~\cite[Corollary~13]{quantum_logic}, we had also shown that there are $C^*(H)$ which are not residually finite-dimensional. In fact, we can now say something stronger:

\begin{thm}
\label{rfd}
There is no algorithm to determine whether $C^*(H)$ is residually finite-dimensional for a given $H$.
\end{thm}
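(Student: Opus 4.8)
The plan is to prove undecidability of residual finite-dimensionality by reduction from the already-established undecidability of nontriviality (Theorem~\ref{undecidable}). The key idea is to engineer, from an arbitrary hypergraph $H$, a new hypergraph $H'$ whose free hypergraph C*-algebra is residually finite-dimensional precisely when $C^*(H) = 0$, or precisely when it is nonzero---whichever direction can be arranged. The cleanest approach is to produce $H'$ with $C^*(H') \cong C^*(H) \ast A$ (a free product) for some fixed, explicitly chosen C*-algebra $A$, using the disjoint union construction of Example~\ref{du}, and to choose $A$ so that the RFD status of the free product is controlled by whether $C^*(H)$ vanishes.

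First I would recall that by Theorem~\ref{undecidable} the set of $H$ with $C^*(H) = 0$ is undecidable (indeed of the Turing degree of the halting problem), while it is semi-decidable: there is a non-terminating algorithm certifying $C^*(H) \neq 0$. The strategy is therefore to fix a known, explicitly presentable free hypergraph C*-algebra $A$ that is \emph{not} residually finite-dimensional---for instance one of the non-RFD examples already produced in \cite[Corollary~13]{quantum_logic}, which exist precisely because Theorem~\ref{undecidable} forces some $C^*(H)$ to be nonzero yet not RFD. I would then set $H' := H \amalg H_A$ where $C^*(H_A) = A$, so that $C^*(H') \cong C^*(H) \ast A$ by Example~\ref{du}.

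The crux is then an algebraic fact about free products: when $C^*(H) = 0$, the free product collapses, $C^*(H) \ast A \cong 0 \ast A \cong 0$, and the zero algebra is trivially residually finite-dimensional (vacuously, having no nonzero elements to separate); whereas when $C^*(H) \neq 0$, the free product $C^*(H) \ast A$ contains a copy of the non-RFD algebra $A$ as a unital subalgebra---indeed $A$ retracts off the free product via the canonical inclusion and the augmentation sending $C^*(H)$ into $A$ through some fixed state or character, or via a projection homomorphism---so that the existence of a separating family of finite-dimensional representations of the free product would restrict to one for $A$, contradicting that $A$ is not RFD. Hence $C^*(H')$ is RFD if and only if $C^*(H) = 0$, and an algorithm deciding RFD-ness of free hypergraph C*-algebras would decide the vanishing problem, contradicting Theorem~\ref{undecidable}.

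The main obstacle I anticipate is the retraction step: I need to ensure that non-RFD-ness of $A$ genuinely obstructs RFD-ness of the \emph{free product}, which requires a unital $*$-homomorphism $C^*(H) \ast A \to A$ restricting to the identity on $A$. Such a retraction exists as soon as $C^*(H)$ admits \emph{some} unital $*$-homomorphism into $A$ (or into a common overalgebra)---but when $C^*(H) \neq 0$ it has a representation on some Hilbert space, and one can route $A$ into $\mathcal{B}(\mathcal{H})$ as well and appeal to the universal property; care is needed because we want finite-dimensional separation, so the cleaner route is to observe directly that any finite-dimensional representation of the free product restricts to a finite-dimensional representation of $A$, and that these restrictions must already fail to separate points of $A$. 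A subtlety worth flagging is that we must choose $A$ to be itself a free hypergraph C*-algebra with an \emph{explicit} hypergraph presentation so that $H'$ is algorithmically constructible from $H$; since the non-RFD example of \cite[Corollary~13]{quantum_logic} arises from the same reduction machinery, it may be more economical to build the reduction directly from the group-theoretic or nonlocal-game source used there, rather than through a black-box free product, and I expect the actual proof to take this more direct route.
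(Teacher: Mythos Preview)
Your reduction is sound and yields the theorem, but it is not the route the paper takes. The paper argues by a direct Turing reduction with no auxiliary hypergraph at all: given an oracle for residual finite-dimensionality, one decides $C^*(H)\stackrel{?}{=}0$ by querying the oracle on $H$ itself. If the answer is ``not RFD'', then $C^*(H)\neq 0$ automatically, since the zero algebra is RFD. If the answer is ``RFD'', one invokes the result of \cite{compute_norm} that norms of $*$-polynomials in the generators of a residually finite-dimensional finitely presented C*-algebra are computable, and simply computes $\|1\|$ to distinguish $0$ from $1$. Either branch solves the triviality problem, contradicting Theorem~\ref{undecidable}.

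Your approach instead produces a many-one reduction $H\mapsto H\amalg H_A$, which is in principle sharper information, at the cost of needing a fixed non-RFD free hypergraph C*-algebra $A=C^*(H_A)$ as an ingredient. Two remarks on your write-up. First, the retraction you reach for is a red herring and generally does not exist: a retraction $C^*(H)\ast A\to A$ would require a unital $*$-homomorphism $C^*(H)\to A$, and a state is not one. What you actually need, and what you nearly say in your ``cleaner route'', is only that the canonical map $A\to C^*(H)\ast A$ is \emph{injective} whenever $C^*(H)\neq 0$ (tensor a faithful representation of $A$ with any unital representation of $C^*(H)$ and use the universal property); then the hereditary nature of RFD under passage to C*-subalgebras finishes the argument. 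Second, for computability of the reduction you only need $H_A$ to be some fixed finite object, so even a bare existence proof of a non-RFD $C^*(H_A)$ suffices; there is no need to exhibit it explicitly.
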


This result matches the empirical difficulty of answering the Connes Embedding Problem, known to be equivalent to the residual finite-dimensionality of the free hypergraph C*-algebra of Figure~\ref{qwep}. 

\begin{proof}
Suppose that we had such an algorithm. Then we can solve the decision problem $C^*(H) \stackrel{?}{=} 0$ as follows: deciding whether $C^*(H)$ is nontrivial is equivalent to deciding whether $\|1\| = 1$ or $\|1\| = 0$ in $C^*(H)$. Using ideas from semidefinite programming and enumeration of finite-dimensional representations, it was shown in~\cite{compute_norm} that the norm of every polynomial in the generators of a finitely presented (or recursively presented) C*-algebra is computable if the C*-algebra is residually finite-dimensional. So for given $H$, we first run the assumed residual finite-dimensionality oracle. If $H$ is residually finite-dimensional, then we run the algorithm of~\cite{compute_norm} to determine whether $\|1\| = 1$ or $\|1\| = 0$. In the other case, if $H$ is not residually finite-dimensional, then we already know that it must be nontrivial. Hence we have an algorithm that decides $C^*(H) \stackrel{?}{=} 0$.
\end{proof}

In contrast to the situation with Theorem~\ref{undecidable}, we do not even know whether residual finite-dimensionality of $C^*(H)$ is semi-decidable.

\subsection{Infinite-dimensional representations}

A result which is stronger than the existence of $C^*(H)$ which is not residually finite-dimensional is the following:

\begin{thm}
\label{infrep}
There are $H$ such that $C^*(H)$ is nonzero, but has no finite-dimensional representation. Moreover, there is no algorithm to determine whether this is the case for a given $H$.
\end{thm}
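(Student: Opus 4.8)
The plan is to split the statement into the existence claim and the undecidability claim, and to reduce the latter to Theorem~\ref{undecidable} using a single fixed gadget. For the existence part, I would fix a hypergraph $H_0$ with $C^*(H_0) \neq 0$ admitting no finite-dimensional representation; this $H_0$ then also witnesses the first sentence of the theorem directly. Such an $H_0$ is available off the shelf: the two notions of quantum isomorphism discussed in Example~\ref{qgi} are inequivalent, the finite-dimensional one being strictly stronger, so there are graphs $G,G'$ that are quantum isomorphic in the weak sense---meaning the associated free hypergraph C*-algebra is nonzero---but not in the finite-dimensional sense, and the corresponding hypergraph is the desired $H_0$. Alternatively, one can build $H_0$ from a Slofstra solution group $\Gamma$ (Example~\ref{exsolgroup}, \cite{embedding_theorem}) by enforcing $J = -1$: nontriviality of the central element $J$ makes $C^*(H_0)$ nonzero, while the absence of a finite-dimensional representation sending $J \mapsto -1$ ensures that $C^*(H_0)$ has no finite-dimensional representation.

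For the undecidability claim, I would reduce the problem $C^*(H) \stackrel{?}{=} 0$ to the property in question. Given an arbitrary hypergraph $H$, form the disjoint union $H' := H \amalg H_0$, so that by Example~\ref{du} we have $C^*(H') \cong C^*(H) \ast C^*(H_0)$. I would then distinguish two cases. If $C^*(H) = 0$, then its unit is $0$, hence the unit of the free product is $0$ and $C^*(H') = 0$, so $H'$ fails the property. If $C^*(H) \neq 0$, then the unital free product of two nonzero C*-algebras is again nonzero (each factor embeds unitally), so $C^*(H') \neq 0$; moreover any finite-dimensional representation of $C^*(H')$ would restrict, via the canonical unital inclusion, to a nonzero finite-dimensional representation of $C^*(H_0)$, which is impossible. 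Hence $C^*(H')$ is nonzero with no finite-dimensional representation exactly when $C^*(H) \neq 0$.

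Consequently the map $H \mapsto H \amalg H_0$ is a computable many-one reduction of the undecidable problem $C^*(H) \neq 0$ to the property ``$C^*(H')$ is nonzero but has no finite-dimensional representation''. Were the latter decidable, then so would be $C^*(H) \stackrel{?}{=} 0$, contradicting Theorem~\ref{undecidable}.

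The reduction itself is elementary once $H_0$ is in hand; the real content is imported, namely the existence of a nonzero free hypergraph C*-algebra without finite-dimensional representations, which ultimately rests on the non-closure phenomenon behind \cite{embedding_theorem}. The two points that still need a small but genuine argument are that the unital free product of nonzero C*-algebras is nonzero, and that finite-dimensional representations of $C^*(H) \ast C^*(H_0)$ restrict to finite-dimensional representations of each factor; both are standard, the first following from the free product of states together with the GNS construction, and the second from functoriality of the canonical inclusions.
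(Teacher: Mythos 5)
Your proof is correct, but it takes a genuinely different route from the paper. The paper gives a Turing reduction: assuming an oracle for the property, it decides $C^*(H) \stackrel{?}{=} 0$ by running the norm-computation algorithm of~\cite{compute_norm} in the case where a finite-dimensional representation exists (observing that $\|1\|=1$ is then attained in \emph{any} finite-dimensional representation), and it obtains the \emph{existence} of a witness $H$ as a trivial consequence of the undecidability. You instead give a computable many-one reduction $H \mapsto H \amalg H_0$ built on a fixed gadget $H_0$ whose existence you import from known results (the inequivalence of the two notions of quantum isomorphism in Example~\ref{qgi}, or equivalently the non-closure phenomenon behind~\cite{embedding_theorem}); the paper itself concedes in Remark~\ref{nonstrict} that the existence of such an $H_0$ is not new, so there is no circularity. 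Your case analysis is sound: $C^*(H\amalg H_0)\cong C^*(H)\ast C^*(H_0)$ by Example~\ref{du}, the unital free product with the zero algebra is zero (the zero algebra is terminal in $\Calg$), the unital free product of two nonzero unital C*-algebras is nonzero (e.g.\ via $\pi\otimes 1$ and $1\otimes\sigma$ on a tensor product of representation spaces), and any finite-dimensional representation of the free product composes with the canonical inclusion to give a unital, hence nonzero, finite-dimensional representation of $C^*(H_0)$. What each approach buys: the paper's argument is self-contained in that it needs no explicit $H_0$ and derives existence for free, but it leans on the machinery of~\cite{compute_norm} and only yields a Turing reduction; yours is more elementary, avoids semidefinite programming entirely, and gives a many-one reduction from the complement of $C^*(H)\stackrel{?}{=}0$, hence a sharper hardness statement ($\Pi_1$-hardness), at the cost of having to exhibit $H_0$ up front. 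Your second suggested construction of $H_0$ via solution groups with $J=-1$ is stated too loosely to stand on its own (one must argue that a representation with $J\mapsto -1$ exists while no finite-dimensional one does), but the quantum-isomorphism route already suffices.
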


As per Remark~\ref{nonstrict}, the existence of $H$ which only has infinite-dimensional representations is not new. It means that there are certain \emph{finite} configurations of closed subspaces which can only be realized in infinite-dimensional Hilbert spaces.

\begin{proof}
This is very much analogous to the proof of Theorem~\ref{rfd}. The only difference is that the finite-dimensional representations may now not be dense in the dual. However if $\|1\| = 1$, then this will still be attained in \emph{any} finite-dimensional representation, which is enough for the algorithm of~\cite{compute_norm} to work.

This proves the undecidability. The existence of $H$ for which the decision problem has a positive answer trivially follows.
\end{proof}

Again, we do not know whether the problem of Theorem~\ref{infrep} is semi-decidable.

\subsection{Existence of tracial states}

Another interesting property of free hypergraph C*-algebras is the existence of a tracial state.

\begin{thm}
\label{tracial}
There is no algorithm to determine whether $C^*(H)$ has a tracial state for a given $H$.
\end{thm}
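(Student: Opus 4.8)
\section*{Proof proposal}

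The plan is to reduce from the nontriviality problem of Theorem~\ref{undecidable}, but in a more direct way than the norm-computation arguments used for Theorems~\ref{rfd} and~\ref{infrep}: rather than feeding a hypothetical tracial-state oracle into the algorithm of~\cite{compute_norm}, I would reuse the \emph{same} family of hypergraphs that witnesses the undecidability in Theorem~\ref{undecidable}, and show that for these particular $H$ the existence of a tracial state is \emph{equivalent} to nontriviality. A tracial-state oracle would then decide $C^*(H) \stackrel{?}{=} 0$ on this family; since the latter is undecidable, so must be the existence of a tracial state.

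Recall (cf.\ Example~\ref{exsolgroup} and~\cite{quantum_logic}) that the hypergraphs realizing Theorem~\ref{undecidable} arise from a solution group $\Gamma$ carrying a distinguished central element $J$ with $J^2 = e$, with $C^*(H) \cong C^*(\Gamma)/(J+1)$, this algebra being nonzero precisely when $J \neq e$ in $\Gamma$ (the undecidable condition). It therefore suffices to establish the following: for any group $\Gamma$ with a central involution $J$, the algebra $C^*(\Gamma)/(J+1)$ admits a tracial state if and only if it is nonzero, if and only if $J \neq e$.

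For this key observation I would argue via the canonical trace. In the maximal group C*-algebra $C^*(\Gamma)$ the element $J$ is a central self-adjoint unitary, so $p := \tfrac{1+J}{2}$ and $q := \tfrac{1-J}{2}$ are complementary central projections and $C^*(\Gamma) \cong pC^*(\Gamma) \oplus qC^*(\Gamma)$. The closed ideal generated by $J+1 = 2p$ is exactly $pC^*(\Gamma)$, so $C^*(H) \cong qC^*(\Gamma)$, and one checks $Jq = -q$, confirming that $J$ acts as $-1$ on this summand. Let $\tau$ be the canonical tracial state on $C^*(\Gamma)$, determined on the group by $\tau(g) = \delta_{g,e}$. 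Then $\tau(q) = \tfrac{1 - \tau(J)}{2} = \tfrac{1 - \delta_{J,e}}{2}$. If $J \neq e$ this equals $\tfrac12 > 0$, whence $q \neq 0$ and $x \mapsto 2\tau(x)$ restricts to a tracial state on $C^*(H) = qC^*(\Gamma)$ (it is tracial and positive, and sends the unit $q$ to $1$); if $J = e$ then $q = 0$, so $C^*(H) = 0$ carries no state at all, since a state would have to send its unit $0$ to $1$.

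The step I expect to require the most care is not the trace computation, which is elementary, but confirming that the hypergraphs underlying Theorem~\ref{undecidable} genuinely have this group-algebra-quotient form with a \emph{central} involution $J$, so that the central decomposition above is available; this is where one must track the precise construction of~\cite{quantum_logic}. A secondary point to verify is that the canonical trace really does extend from the group algebra to the \emph{maximal} completion $C^*(\Gamma)$, which it does since it factors through the regular representation $C^*(\Gamma) \to C^*_{\mathrm{red}}(\Gamma)$ followed by the vector state at $\delta_e$, and that restriction and normalization along the central summand preserve traciality, positivity, and unitality.
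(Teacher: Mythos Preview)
Your argument is correct and takes a genuinely different route from the paper. The paper passes through Theorem~\ref{syncthm} to synchronous nonlocal games, invokes the general characterization from~\cite{games} that the game C*-algebra has a tracial state if and only if the game has a perfect commuting-operator strategy, and then cites Slofstra's undecidability for binary constraint system games via~\cite{sync_bcs}. You instead stay with the solution-group hypergraphs already underlying Theorem~\ref{undecidable}, exploit the central decomposition $C^*(\Gamma) \cong pC^*(\Gamma)\oplus qC^*(\Gamma)$ afforded by the central involution $J$, and read off a tracial state on $qC^*(\Gamma)$ directly from the canonical group trace whenever $J\neq e$. This collapses ``has a tracial state'' to ``is nonzero'' on that family, so the undecidability is inherited verbatim from Theorem~\ref{undecidable}.

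What each approach buys: the paper's route is more modular---it establishes that the tracial-state question for \emph{arbitrary} free hypergraph C*-algebras is equivalent to the commuting-operator-strategy question for synchronous games, which is of independent interest and reusable. Your route is more self-contained: it avoids both~\cite{games} and~\cite{sync_bcs}, needing only the central-projection splitting and the canonical trace, at the cost of being tied to the particular solution-group family. Your caveat about verifying that the hypergraphs of~\cite{quantum_logic} really present $C^*(\Gamma)/(J+1)$ with $J$ central is well placed; this is exactly what Example~\ref{exsolgroup} asserts, and it does hold for solution groups by their definition. The secondary points you flag (extension of the canonical trace to $C^*(\Gamma)$ via the regular representation, and preservation of traciality under restriction to a central summand) are routine.
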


\begin{proof}
By Theorem~\ref{syncthm} and the fact that the proof is constructive, it is enough to prove this for C*-algebras of synchronous nonlocal games. But then it is known that such a C*-algebra has a tracial state if and only if the game has a perfect commuting-operator strategy~\cite[Theorem~3.2(3)]{games}. Using the construction of synchronous games associated to binary constraint systems~\cite[Corollary~4.4(1)]{sync_bcs}, the claim follows from Slofstra's undecidability result for binary constraint systems~\cite[Corollary~3.3]{embedding_theorem}.
\end{proof}

In this case, we know that the problem is semi-decidable, and therefore also Turing equivalent to the halting problem: if there is no tracial state, then there is again a hierarchy of semidefinite programs which will detect this~\cite{trace_opt}. 

\subsection{Independence from the ZFC axioms}

The relation between undecidability and independence guaranteed by Theorem~\ref{incomplete_main} allows us to translate our undecidability results into independence results.

\begin{cor}
\label{zfc}
Let $F$ be a formal system which is recursively axiomatizable and contains elementary arithmetic. Then if $F$ is consistent, there are hypergraphs $H_1$ to $H_4$ such that each of the following sentences is independent of $F$:
\begin{enumerate}
\item $C^*(H_1) = 0$;
\item $C^*(H_2)$ is residually finite-dimensional;
\item $H_3$ has infinite-dimensional representations but no finite-dimensional ones;
\item $C^*(H_4)$ has a tracial state.
\end{enumerate}
In particular, this holds true when $F$ is given by the standard ZFC axioms.
\end{cor}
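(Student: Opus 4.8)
The plan is to derive Corollary~\ref{zfc} by applying the undecidability-to-independence mechanism of Theorem~\ref{incomplete_main} separately to each of the four undecidability results, namely Theorems~\ref{undecidable}, \ref{rfd}, \ref{infrep} and~\ref{tracial}. In each case the decision problem at hand is undecidable, and each of the assertions ``$C^*(H)=0$'', ``$C^*(H)$ is residually finite-dimensional'', and so on, is arithmetizable: for a given $H$ it unfolds into an honest first-order sentence in the language of $F$ (indeed into a $\Sigma_1$ or $\Pi_1$ sentence, since e.g.\ nontriviality and the existence of a tracial state are semi-decidable). The first step is therefore to fix, for each of the four properties, a canonical arithmetization $H \mapsto \sigma_H$ such that $\sigma_H$ is true exactly when $C^*(H)$ has the property in question, and such that $F$ proves the translation to be correct on every concrete input.

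The naive heuristic behind Theorem~\ref{incomplete_main} is the familiar one: if $F$ decided $\sigma_H$ for every $H$ --- proving one of $\sigma_H$, $\neg\sigma_H$ in each case --- then, since $F$ is recursively axiomatizable, a parallel search through all $F$-proofs would terminate on every input and thereby decide the property, contradicting undecidability. The second step is to run this argument in the correct logical form for each problem, feeding in the appropriate one of Theorems~\ref{undecidable}--\ref{tracial}.

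The hard part, and the reason the real work is hidden in Theorem~\ref{incomplete_main} and Appendix~\ref{logic}, is that the naive heuristic only yields independence when $F$ is assumed \emph{sound} (or at least $\Sigma_1$-sound), whereas the hypothesis here is mere consistency. A consistent but $\Sigma_1$-unsound $F$ can prove false $\Sigma_1$ sentences; for such $F$ one can have $F \vdash \sigma_H$ even though $C^*(H)\neq 0$, so that a proof-search verdict need not match the truth, and one-sided unprovability (which does follow from consistency, via the fact that $F$ never proves a false $\Pi_1$ sentence) falls short of genuine two-sided independence. The device that repairs this is Rosser's theorem rather than G\"odel's: for any consistent recursively axiomatizable $F$ containing arithmetic there is a halting instance whose arithmetization is \emph{independent} of $F$, using consistency alone. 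The plan is then to transport such a Rosser-independent instance along the computable reductions that underlie Theorems~\ref{undecidable}--\ref{tracial}: each of these reductions is $F$-provably correct, so it converts the independent halting sentence into an independent sentence of the required shape, producing the desired $H_1,\dots,H_4$.

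The main technical obstacle I anticipate is ensuring that \emph{independence}, and not merely undecidability, survives the reductions. For nontriviality (Theorem~\ref{undecidable}) and the tracial-state problem (Theorem~\ref{tracial}) the underlying reductions from the word/halting problem are many-one and provably correct, so the transport is clean. For residual finite-dimensionality (Theorem~\ref{rfd}) and the infinite-dimensional-representation property (Theorem~\ref{infrep}), however, the arguments given above are \emph{Turing} reductions that additionally invoke the norm-computation algorithm of~\cite{compute_norm}; to carry independence one must either refine these into $F$-provably-correct many-one reductions from an already-independent instance, or apply Theorem~\ref{incomplete_main} in the version tailored to this situation. Handling this last point correctly --- together with pinning down exactly which sentence is independent and in what metatheory the equivalences are verified --- is precisely the delicate business deferred to Appendix~\ref{logic}.
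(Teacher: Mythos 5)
Your proposal takes essentially the same route as the paper, whose proof of Corollary~\ref{zfc} consists of the single observation that all four undecidability proofs are ultimately by reduction from the halting problem, so that Theorem~\ref{incomplete_main} applies. You correctly locate the two real issues: first, that mere consistency does not support the naive proof-search argument, which is precisely what Theorem~\ref{incomplete_main} (via Kleene's symmetric form of the first incompleteness theorem, i.e.\ a Rosser-type device) is designed to repair; and second, that independence of a halting instance only transports along reductions that are many-one and $F$-provably correct. On the second point you are in fact more careful than the paper: the proofs of Theorems~\ref{rfd} and~\ref{infrep} as written are Turing reductions invoking the norm-computation algorithm of~\cite{compute_norm}, and such a reduction does not by itself convert the independent halting instance into an independent instance of residual finite-dimensionality or of the infinite-dimensional-representation property. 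The step that closes this gap (left implicit in the paper as well) is the disjoint-union trick: fix a hypergraph $H'$ with $C^*(H')$ nonzero and not residually finite-dimensional (respectively, nonzero with no finite-dimensional representation), and replace the hypergraph $H$ output by the nontriviality reduction with $H\amalg H'$. Since $C^*(H\amalg H')\cong C^*(H)\ast C^*(H')$ is zero when $C^*(H)=0$ and otherwise contains $C^*(H')$ as a unital C*-subalgebra, this yields provably correct many-one reductions for properties (b) and (c), after which independence transports exactly as you describe. Modulo writing out that refinement, your plan is correct and coincides with the paper's.
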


\begin{proof}
All four relevant undecidability proofs are ultimately by reduction from the halting problem. We can therefore apply Theorem~\ref{incomplete_main}.
\end{proof}

In each case, what we mean by ``independent of $F$'' is the metamathematical statement ``can neither be proven nor disproven in $F$''. Since the proof of Theorem~\ref{incomplete_main} is constructive, and so are all the computational reductions involved in reducing the halting problem to our undecidable decision problems, we conclude that it is possible in principle to write down these hypergraphs explicitly. However, we have not attempted to do so in the case where $F$ is ZFC, since we expect their descriptions to be prohibitively large, and we do not anticipate to gain much additional insight from doing so. Although it is possible that one can take $H_2$ to be the hypergraph of Figure~\ref{qwep} for ZFC, so that the Connes Embedding Problem becomes independent of the ZFC axioms, we have no particular reason to believe that this is the case.

\begin{rem}
Finding explicit examples of $H$ for which $C^*(H) \neq 0$ only has infinite-dimensional representations should be easier than finding examples of the independence from ZFC, since now one needs to retrace the computational reductions only starting with any Turing machine that does not halt, or more directly any word in the generators of a finitely presented group that does not represent the unit element. This will give an explicit example of such $H$.
\end{rem}

\subsection{Concluding remarks}

In the previous three subsections, we have considered three different properties of free hypergraph C*-algebras. For $C^*(H)\neq 0$, the negation of the second property states that $C^*(H)$ has \emph{some} finite-dimensional representation. In terms of this property, we have the following implications:
\[
	\textrm{residually finite-dimensional} \quad\Longrightarrow\quad \exists\:\textrm{finite-dimensional representation} \quad\Longrightarrow\quad \exists\:\textrm{tracial state}.
\]

\begin{rem}
\label{nonstrict}
As was pointed out to us by William Slofstra, it is not hard to see that the first implication is strict: take a solution group $\Gamma$ which is not residually finite; such a group is known to exist by Slofstra's embedding theorem~\cite{embedding_theorem}. Then $C^*(\Gamma)$ is a free hypergraph C*-algebra by Example~\ref{exsolgroup}, but is not residually finite-dimensional. However, $C^*(\Gamma)$ has a finite-dimensional representation since $\Gamma$ has at least the trivial representation.

As was pointed out to us by David Roberson, also the second implication is strict: it is known that a synchronous nonlocal game has a perfect commuting-operator strategy if and only if the associated free hypergraph C*-algebra $C^*(H)$ has a tracial state, and a perfect finite-dimensional strategy if and only it has a finite-dimensional representation~\cite[Theorem~3.2]{games}. Now there are synchronous nonlocal games for which the former type of strategy exists, but the latter does not~\cite[Corollary~4.6]{sync_bcs}.
\end{rem}

Embarrassingly, we not know whether every nontrivial free hypergraph C*-algebra has a tracial state. This is a question due to Andreas Thom:

\begin{prob}[Thom]
Is there $H$ such that $C^*(H)$ is nonzero, but has no tracial state?
\end{prob}

If the answer is positive, then the decision problems addressed by Theorems~\ref{undecidable} and~\ref{tracial} are distinct. As was pointed out to us by David Roberson, the answer is known to be \emph{negative} for all free hypergraph C*-algebras arising from quantum graph isomorphism as in Example~\ref{qgi}~\cite[Theorem~4.4]{quantum_graph_iso2}.

Our results indicate that many properties of free hypergraph C*-algebras are undecidable. This is reminiscent of the \emph{Adian--Rabin theorem} for finitely presented groups, which gives extremely general sufficient conditions for a property of group presentations to be undecidable.

\begin{prob}
Is there some analog of the Adian--Rabin theorem for free hypergraph C*-algebras?
\end{prob}

In particular, we expect a negative answer to the following interesting question:

\begin{prob}
Is there an algorithm to determine whether $C^*(H)$ is commutative for a given $H$?
\end{prob}

\newpage
\appendix
\section{A Turing machine whose halting is independent of ZFC}
\label{logic}

\newcommand{\Halt}{\mathsf{Halt}}
\newcommand{\Tind}{T_{\mathrm{ind}}}
\newcommand{\Tindhat}{\hat{T}_{\mathrm{ind}}}

This section is based on a series of fruitful discussions with Nicholas Gauguin Houghton-Larsen. We do not make any claims of originality here, but merely include this appendix for the sake of completeness. Indeed Theorem~\ref{incomplete_main} below seems to be a well-known folklore result, but not readily available in the literature in this form.

It is a standard observation that if ZFC is consistent, then there is a Turing machine $\Tind$ for which the question whether it halts on a blank tape is independent of the ZFC axioms of set theory. This is not specific to ZFC: just like G\"odel's incompleteness theorems, this holds for any sufficiently expressive and recursively axiomatizable formal system $F$. This is Theorem~\ref{incomplete_main} below.

There are two closely related standard arguments employed when trying to prove this~\cite{spectral_gap,aaron_yedi,smullyan}, which we now sketch. However, the subtlety with these arguments is that both actually require stronger assumptions than consistency.\footnote{This is certainly not a new observation, see e.g.~\href{https://mathoverflow.net/questions/130789/are-the-two-meanings-of-undecidable-related?\#comment337852\_130815}{https://mathoverflow.net/questions/130789/are-the-two-meanings-of-undecidable-related?\#comment337852\_130815}.} While the following exposition is not intended to be completely precise in the details, we keep it formal enough to illustrate the difficulties involved, which are easy to overlook. A reader not interested in these difficulties may proceed directly to the statement and proof of Theorem~\ref{incomplete_main}.

In either kind of argument, one reasons about Turing machines in $F$ by encoding every Turing machine $T$ via its description number $\hat{T}$. One also uses a natural number predicate $\Halt$ in $F$ such that $\Halt(\hat{T})$ is a sentence expressing the halting of $T$, in the sense that
\beq
\label{internalize}
	T \textrm{ halts on the blank tape} \quad\Longrightarrow\quad F \vdash \Halt(\hat{T}).
\eeq
For many reasonable $F$, such as e.g.~the ZFC axioms, it is natural to assume that also the converse implication holds: if $F$ proves $\Halt(\hat{T})$, then $T$ \emph{does} indeed halt. Similarly, if $F$ is consistent, then the contrapositive of~\eqref{internalize} shows that if proves $\lnot \Halt(\hat{T})$, then $T$ does indeed not halt. So using the converse of~\eqref{internalize} as a soundness assumption, we can reason as follows: if $F$ is such that either $F \vdash \Halt(\hat{T})$ or $F \vdash \lnot \Halt(\hat{T})$ for all $T$, then we can define $T_{\mathrm{halt\textnormal{-}solver}}$ to be the Turing machine which takes another Turing machine $T$ as input and enumerates all consequences of the axioms of $F$ until it finds a proof of $\Halt(\hat{T})$ or a proof of $\lnot\Halt(\hat{T})$. Then by the assumption that $F$ decides all instances of $\Halt(\hat{T})$, we conclude that $T_{\mathrm{halt\textnormal{-}solver}}$ always terminates, thereby solving the halting problem. Since this is absurd, one of our assumptions must have been false, meaning either that $T$ is inconsistent, or that some instance $\Halt(\hat{T})$ is independent of $F$. 

In the case of ZFC (and a suitable metatheory), the relevant soundness requirement---namely the converse of~\eqref{internalize}---would follow e.g.~from the existence of a standard model, which is a natural enough additional assumption. Nevertheless, it is also perfectly possible that $F$ is some kind of set theory like ZFC such that the set of natural numbers in $F$ contains nonstandard numbers, and that a given Turing machine $T$ halts in $F$ after nonstandard many steps; it has been argued that this is a real possibility even in the case of ZFC~\cite{indispensable}. If this happens, then the proof of its halting can clearly not be externalized, and our hypothetical Turing machine $\Tind$ does not solve the halting problem correctly on all instances.

\newcommand{\Con}[1]{\mathrm{Con}(#1)}

The other standard argument for proving the existence of a Turing machine whose halting is independent of $F$ is closely related, but requires $\omega$-consistency; although this is a weaker assumption than soundness, it is still strictly stronger that consistency. G\"odel's second incompleteness theorem is concerned with a sentence $\Con{F}$ which expresses the consistency of $F$, and states that if $F$ is consistent, then $F\not\vdash\Con{F}$. The informal statement that $\Con{F}$ expresses the consistency of $F$ means formally that $\Con{F}$ is a sentence of the form $\forall n.\Con{F,n}$, where $\Con{F,n}$ states that the first $n$ consequences of the axioms of $F$ do not contain a contradiction, in the sense that if $n\in\N$ is an external natural number, then $F \vdash \Con{F,n}$ if and only if the first $n$ consequences of $F$ do indeed not contain a contradiction. So we now take $\Tind$ to be the Turing machine which enumerates all consequences of the axioms of $F$ and halts as soon as it encounters a contradiction. Since the proof of the equivalence between halting of $\Tind$ and consistency of $F$ can be internalized, we conclude that $F$ decides $\Halt(\Tindhat)$ if and only if it decides $\Con{F}$. But since we already know that $F \not\vdash \Con{F}$, it is enough to argue that $F \not\vdash \lnot \Con{F}$ as well, which is equivalent to $F \not \vdash \exists n.\lnot \Con{F,n}$. And this is because if $F\vdash \exists n.\lnot \Con{F,n}$ were the case, then we would have an external $n\in\N$ with $F \vdash \lnot \Con{F,n}$ by $\omega$-consistency, in which case $F$ would actually be inconsistent. Thus $\Halt(\Tindhat)$ is independent of $F$.

In order to see that $\omega$-consistency is necessary in order to make this argument, it is enough to note that there are consistent $F$ such that $F\vdash \lnot \Con{F}$.\footnote{See e.g.~\href{https://mathoverflow.net/a/256862/27013}{mathoverflow.net/a/256862/27013}.} Such an $F$ also proves $\Halt(\Tindhat)$, although $\Tind$ would not actually halt, thereby violating soundness as well.

In conclusion, the standard arguments for proving the existence of a Turing machine whose halting is independent require an assumption stronger than mere consistency. Nevertheless, there is another argument which proves that consistency is enough after all. In contrast to the first argument above, it is even constructive.

\begin{thm}
\label{incomplete_main}
Let $F$ be a formal system which is recursively axiomatizable and contains elementary arithmetic. Then if $F$ is consistent, there is an explicit Turing machine $\Tind$ whose halting is independent of $F$,
\[
	F\not\vdash\Halt(\Tindhat), \qquad F\not\vdash \lnot \Halt(\Tindhat).
\]
\end{thm}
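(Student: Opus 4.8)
The plan is to reduce the claim to the Gödel--Rosser form of the first incompleteness theorem, which---unlike the two arguments sketched above---needs only the consistency of $F$. First I would invoke the Rosser construction: since $F$ is recursively axiomatizable and contains elementary arithmetic, the diagonal lemma applied to the Rosser provability predicate yields a sentence $\rho$ such that, assuming only that $F$ is consistent, $F \not\vdash \rho$ and $F \not\vdash \lnot\rho$. The point I would stress is that $\rho$ may be taken to be $\Pi_1$, i.e.\ of the form $\forall n\,\psi(n)$ with $\psi$ a decidable (primitive recursive) predicate; this is automatic because the bounded quantifier ``there is a shorter proof of $\lnot\rho$'' occurring in the Rosser sentence keeps it at the $\Pi_1$ level.

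Second, I would turn the $\Sigma_1$ sentence $\lnot\rho \equiv \exists n\,\lnot\psi(n)$ into the halting of an explicit machine. Let $\Tind$ be the Turing machine which, on the blank tape, computes $\psi(0), \psi(1), \psi(2), \ldots$ in turn and halts as soon as it encounters some $n$ with $\lnot\psi(n)$. This $\Tind$ is explicit and is produced constructively from $\rho$. Externally, $\Tind$ halts precisely when $\lnot\rho$ is true; and, because $F$ contains elementary arithmetic and $\Halt(\Tindhat)$ is the standard $\Sigma_1$ formalization of halting, $F$ can trace through the elementary definition of $\Tind$ to prove the internal equivalence
\[
	F \vdash \bigl( \Halt(\Tindhat) \leftrightarrow \lnot\rho \bigr).
\]

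Third, I would transfer the independence across this provable equivalence. If $F \vdash \Halt(\Tindhat)$, then $F \vdash \lnot\rho$, contradicting $F \not\vdash \lnot\rho$; and if $F \vdash \lnot\Halt(\Tindhat)$, then $F \vdash \rho$, contradicting $F \not\vdash\rho$. Hence both $F \not\vdash \Halt(\Tindhat)$ and $F \not\vdash \lnot\Halt(\Tindhat)$, which is exactly the asserted independence, and the whole construction is constructive as promised.

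I expect the main obstacle to be the second step, specifically pinning down the \emph{internal} equivalence $F \vdash (\Halt(\Tindhat) \leftrightarrow \lnot\rho)$ rather than merely its external counterpart. This is precisely where the earlier arguments needed soundness or $\omega$-consistency: there one had to move between ``$\Tind$ halts'' and ``$F$ proves that $\Tind$ halts'', which is unavailable under bare consistency. The Rosser reduction sidesteps this because it never asks whether the halting statement is \emph{true}---it only manipulates $F$-provability of $\rho$ and of $\lnot\rho$, both of which are controlled by the Rosser argument under consistency alone. The remaining care is to confirm that ``contains elementary arithmetic'' is enough both to carry out the diagonal lemma for $\rho$ and to formalize the simple search machine $\Tind$; both are standard once the proof predicate of $F$ is representable, which holds for any recursively axiomatizable $F$ extending a weak base arithmetic.
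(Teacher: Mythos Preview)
Your argument is correct: the Rosser sentence is provably $\Pi_1$, the search machine $\Tind$ you build from it has a $\Sigma_1$ halting statement provably equivalent in $F$ to $\lnot\rho$, and independence transfers across that equivalence. This is a genuinely different route from the paper's. The paper does not go through the Rosser sentence at all; instead it invokes Kleene's symmetric form of the first incompleteness theorem via the recursively inseparable pair $K_1 = \{\hat{T} : T \text{ halts and accepts}\}$ and $K_2 = \{\hat{T} : T \text{ halts and rejects}\}$. It observes that the map $T \mapsto T'$, where $T'$ runs $T$ and loops on rejection, makes $\hat{T} \mapsto \Halt(\hat{T}')$ a predicate that strongly separates $K_1$ from $K_2$ inside $F$; Kleene's theorem then hands over an explicit $T$ with $\Halt(\hat{T}')$ undecided, and one sets $\Tind := T'$. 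Your approach stays closer to the classical self-referential narrative and isolates the key step as the internal $\Sigma_1$/halting equivalence, which you rightly flag as the place needing care; the paper's approach is computability-theoretic from the outset, never writes down a self-referential sentence, and trades the verification of that internal equivalence for the (equally standard) verification that $\Halt(\hat{T}')$ strongly separates $K_1$ and $K_2$. Both arguments are constructive and need only consistency; the paper's has the mild advantage that the independent object is a Turing machine throughout rather than first a sentence and then a machine, while yours has the advantage of reusing the Rosser theorem as a black box that many readers already know.
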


\begin{proof}
We use Kleene's symmetric version of the first incompleteness theorem~\cite[p.~69]{smullyan}. The sets
\begin{align*}
	K_1 :&= \{ \: \hat{T} \mid T \textrm{ halts and accepts on the blank tape} \:\},  \\[-4pt]
	K_2 :&= \{ \: \hat{T} \mid T \textrm{ halts and rejects on the blank tape} \:\} 
\end{align*}
are strongly separable, since simply running a Turing machine $T$ until it halts provides a proof of membership in $K_1$ or $K_2$, and this proof can be formalized in $F$.

Now for given $T$, let $T'$ be the Turing machine which runs $T$ and accepts if $T$ halts and accepts, but branches into an infinite loop if $T$ halts and rejects. Then the function $\hat{T} \mapsto \hat{T}'$ is recursive. The predicate $\hat{T} \mapsto \Halt(\hat{T}')$ expresses that for a given description number $\hat{T}$, the modified Turing machine $T'$ halts, and it strongly separates $K_1$ and $K_2$. Therefore by Kleene's symmetric version of the first incompleteness theorem, we obtain an explicit Turing machine $T$ such that $F \not\vdash \Halt(\hat{T}')$ and $F \not\vdash\lnot \Halt(\hat{T}')$. Therefore we can take $\Tind := T'$. 
\end{proof}

\newpage
\newgeometry{bottom=0.1cm}

\bibliographystyle{unsrt}
\bibliography{hypergraph_Calgebras}

\bigskip

\end{document}